\documentclass[10pt]{article}
\usepackage{amssymb,amsmath,textcomp}
\pagestyle{plain}
\usepackage[T1]{fontenc}
\usepackage[utf8]{inputenc}
\usepackage{enumerate}
\usepackage{hyperref}
\usepackage{mathrsfs}
\usepackage{amsthm}
\usepackage{xcolor}
\usepackage[multiple]{footmisc}

\setlength{\oddsidemargin}{0.0cm} \setlength{\evensidemargin}{0.0cm}
\setlength{\textheight}{21.5cm} \setlength{\textwidth}{16cm}
\setlength{\belowdisplayskip}{0.0cm}

\title{Lipschitz stability for Bayesian inference in porous medium tissue growth models
}

\author{Tomasz D\k{e}biec$^1$, Piotr Gwiazda$^{2,3}$,  Błażej Miasojedow$^1$, Katarzyna Ryszewska$^3$, \\ Zuzanna Szyma\'{n}ska$^2$ and Aneta Wr\'{o}blewska-Kami\'{n}ska$^3$}

\newtheorem{theo}{Theorem}

\newtheorem{prop}{Proposition}
\newtheorem{lemma}{Lemma}
\newtheorem{coro}{Corollary}
\newtheorem{remark}{Remark}

\def\divv{\operatorname {div}}
\def\sign{\operatorname{sign}}
\def\supp{\operatorname{supp}}

\newcommand{\eqq}[2]{\begin{equation}  #1  \label{#2}\end{equation}    }
\newcommand{\hd}{\hspace{0.2cm}}

\newcommand{\m}[1]{\mbox{#1}}
\newcommand*{\norm}[1]{\left\Vert{#1}\right\Vert}
\newcommand*{\abs}[1]{\left\vert{#1}\right\vert}
\newcommand{\vf}{\varphi}
\newcommand{\ve}{\varepsilon}
\newcommand{\R}{\mathbb{R}}
\newcommand{\nic}[1]{ }

\begin{document}

\maketitle
\abstract{We consider a macroscopic model for the dynamics of living tissues incorporating
pressure-driven dispersal and pressure-modulated proliferation. Given a power-law constitutive relation between the pressure and cell density, the model can be written as a porous medium equation with a growth term. We prove Lipschitz continuity of the mild solutions of the model with respect to the diffusion parameter (the exponent $\gamma$ in the pressure-density law) in the $L_1$ norm. While of independent analytical interest, our motivation for this result is to provide a vital step towards using Bayesian inverse problem methodology for parameter estimation based on experimental data -- such stability estimates are indispensable for applying sampling algorithms which rely on the gradient of the likelihood function.
}

\vskip .2cm
\noindent 2010 \textit{Mathematics Subject Classification.} 35B30, 35B35, 35B45, 35K57, 35K65, 35Q92;
\newline\textit{Keywords and phrases.} Porous medium equation, Tumour growth, Stability, Continuous dependence, Parameter estimation.\\[-2.em]
\vskip 1cm

\footnotetext[1]{University of Warsaw, Institute of Applied Mathematics and Mechanics, Banacha 2, 02-097 Warsaw, Poland (t.debiec@mimuw.edu.pl, b.miasojedow@mimuw.edu.pl).}

\footnotetext[2]{University of Warsaw, Interdisciplinary Centre for Mathematical and Computational Modelling, Tyniecka 15/17, 02-630 Warsaw, Poland (pgwiazda@mimuw.edu.pl, zk.szymanska@uw.edu.pl)}

\footnotetext[3]{Institute of Mathematics, Polish Academy of Sciences,
\'{S}niadeckich 8, 00-656 Warsaw, Poland (katarzyna.ryszewska@pw.edu.pl, awrob@impan.pl)}

\section{Introduction and main results}
\label{sec:Intro}

Mathematical modelling plays a crucial role in understanding the complex dynamics of tumour growth. Among the various modeling approaches, macroscopic density-based partial differential equations provide a powerful framework for describing the evolution of tumour cell populations over time and space~\cite{ByrneDrasdo, Friedman2007, Goriely}. These models often incorporate mechanical effects by relating tumour cell density to pressure through constitutive relations inspired by fluid and porous media mechanics. One widely used class of models assumes a nonlinear relation between the pressure and density, where the pressure follows a power-law dependence on the density with an exponent $\gamma$, leading to porous-medium type equations with a pressure-dependent proliferation term~\cite{PQV14}.  The macroscopic density of cells, $u=u(t,x)$, is assumed to satisfy the following equation
\begin{equation*}
	\partial_t u + \divv(uv) = uG(p),
\end{equation*}
where the velocity obeys Darcy's law $v = -\nabla p$ and the pressure satisfies
$$p = \frac{\gamma}{\gamma-1}u^{\gamma-1}$$
for some $\gamma > 1$. The reaction rate $G$ is a given function, whose properties will be specified below.
Substituting the velocity law into the equation, it can be rewritten as a porous medium type equation:
\begin{equation}\label{eq:PME_intro}
	\partial_t u = \Delta(u^\gamma)  + uG(p).
\end{equation}
The diffusion exponent $\gamma$ characterises the pattern of tumour expansion, reflecting both the speed and morphology of the evolving tumour boundary.

For these models to be effectively applied in clinical or biological settings, it is essential to calibrate them against observed data. Experimental measurements such as tumour morphology, expansion rates, and imaging data provide valuable information about tumour progression. However, the constitutive relation parameter,  which governs the tumour’s mechanical response, is often unknown and varies across different tumour types and conditions. Therefore, inferring the value of $\gamma$ from observational data is crucial to ensuring that the model accurately reflects real biological behavior. Traditional calibration approaches rely on direct fitting techniques or optimization-based inverse problems, but these methods can be sensitive to noise and may not adequately capture uncertainties in the inferred parameters.

The observed data set being finite and noisy, we face an underdetermined, and often ill-posed, inverse problem, which we would like to address using the framework of Bayesian statistics~\cite{Stuart}.  To this end, we shall investigate the stability of problem~\eqref{eq:PME_intro} with respect to the parameter $\gamma$.
The Bayesian approach provides a probabilistic characterization of uncertainty, allowing us to quantify the confidence in the inferred values of the model parameters and incorporate prior knowledge about their plausible range.  By leveraging observed tumour density distributions at different time points, one can construct a posterior distribution for $\gamma$, which reflects both the likelihood of the data given a specific model realisation and the prior assumptions on the parameter.
More precisely, Bayesian inference is based on posterior distribution together with the conditional distribution of parameters given the observed data, according to the formula
\begin{equation}\label{eq:Bayes}
	\pi(\gamma|D) = \frac{\pi(D|\gamma)\pi(\gamma)}{\int_{\Gamma}\pi(D|\gamma)\;\mathrm{d}\pi(\gamma)},
\end{equation}
where $D$ denotes the collected data, $\gamma$ is a given parameter, or more generally a vector of parameters, taking values in the parameter space $\Gamma$. The posterior $\pi(\gamma|D)$ is the probability density of state $\gamma$ given data $D$,  $\pi(\gamma)$ is the prior probability density, i.e., the probability density of the values $\gamma$ without any knowledge of the data, and $\pi(D|\gamma)$ is the likelihood function, which quantifies the probability of observing data $D$ when the parameters take values $\gamma$. Unsurprisingly, it is practically impossible to obtain a closed analytical formula for the posterior distribution, and hence sampling methods have to be employed. We refer the reader to the recent works~\cite{Falco, M2AN, gwiazda2023, Kahle, Lipkova, zs2021} for examples of the use of the Bayesian inverse problem framework for models arising in tumour modelling. In particular, in~\cite{M2AN} the authors analyse the same model~\eqref{eq:PME_intro} and study the data assimilation problem to estimate the initial condition and the proliferation rate -- however, assuming that the diffusion parameter $\gamma$ is known.

A widely used method for sampling from complex posterior distributions in Bayesian inference is the Metropolis-Hastings Markov Chain Monte Carlo (MCMC) algorithm~\cite{GelfandSmith,Hastings,Metropolis}. It is based on generating a Markov chain whose stationary distribution corresponds to the target posterior, allowing for approximations of expected values and credible intervals. The method involves proposing candidate samples based on a chosen transition kernel and accepting or rejecting these samples based on an acceptance ratio derived from the posterior probability. While powerful, Metropolis-Hastings MCMC can suffer from slow convergence and high autocorrelation, particularly when sampling high-dimensional or strongly correlated distributions.  As such,  the convergence of the algorithm may actually fail in practice, given a reasonable computation time.
Additionally, the efficiency of the algorithm is highly sensitive to the choice of proposal distribution, making parameter tuning a critical challenge.

To address some of the limitations of Metropolis-Hastings MCMC, the Langevin-based approach incorporates gradient information into the proposal step by simulating the Langevin equation from stochastic calculus:
\begin{equation*}
	\mathrm{d} \gamma(t) = -\nabla V(\gamma(t))\mathrm{d}t + \sqrt{2}\, \mathrm{d}B(t),
\end{equation*}
where $B$ is the standard Brownian motion and $V = -\log\pi(D|\gamma) - \log\pi(\gamma)$ is the negative log-posterior (acting like an energy function). The Metropolis-Adjusted Langevin Algorithm (MALA) modifies the standard random walk proposal by incorporating the gradient of the posterior density, thereby guiding the Markov chain toward high-probability regions more efficiently. This results in improved convergence rates and reduced autocorrelation, making the approach particularly useful for Bayesian inference problems where gradients can be computed efficiently~\cite{RobertsTweedie}. In the context of tumour growth modeling, MALA can significantly enhance the efficiency of Bayesian inversion for $\gamma$,  allowing for more accurate and computationally feasible parameter inference.

To understand the overall evolution of the probability density, whose individual sample paths are described by the Langevin equation,  one studies the Fokker-Planck equation:
\begin{equation*}
	\partial_t \pi(\gamma, t) = \divv(\pi(\gamma,t)\nabla V(\gamma)) + \Delta\pi(\gamma,t).
\end{equation*}
Crucially, the steady state of this equation corresponds (up to normalisation constant) to the posterior distribution:
\begin{equation*}
	e^{-V(\gamma)} = \pi(D|\gamma)\pi(\gamma).
\end{equation*}
Since the solutions of the Fokker-Planck equation do converge to the equilibrium state, this implies that if we run the Langevin dynamics for a sufficiently long time, the probability density of samples will converge to the true posterior distribution.  Let us mention that other deterministic PDE models can be considered, each having their own advantages -- for instance, the blob method for a nonlocal approximation of the Fokker-Planck equation~\cite{CKP}, or the Stein variational gradient descent method~\cite{CS,KorbaEtAl,LiuWang}.

A crucial aspect of implementing Bayesian inference for the exponent $\gamma$  is obtaining a quantitative estimate of the stability of the underlying PDE model~\eqref{eq:PME_intro} with respect to the diffusion exponent.
If small variations in $\gamma$  lead to significant changes in the solution of the PDE,  the inference process may be highly sensitive to noise in the observational data. Conversely, if the model exhibits robustness to variations in $\gamma$, the inferred posterior distribution will be more stable and reliable.  By analyzing the stability properties of the model, we can ensure that the Bayesian inversion framework provides meaningful and well-posed estimates of $\gamma$, ultimately leading to more robust predictions of tumour growth dynamics.
More precisely,  a common feature of the above PDE methods for sampling algorithms is that they are defined through the gradient of the likelihood function.  For the likelihood functions which are not $C^1$, or at least Lipschitz,  we can apply some smoothed variant of the method,  but at the expense of generating an additional error, and ultimately resulting in a significantly slower rate of convergence. It is therefore our main goal in this paper to establish an explicit Lipschitz continuous dependence of the solutions of~\eqref{eq:PME_intro} on the exponent $\gamma$. Naturally, from the practical viewpoint, it is of equal interest to estimates other parameters in the model (e.g., in the proliferation rate). We focus here solely on the exponent $\gamma$ as it is the most analytically challenging.

Moreover, we have to bear in mind that in practice the posterior distribution $\pi(\gamma|D)$ is not calculated from exact solutions of the PDE evolving from exactly given data. Rather, it is approximated using numerical solutions, while the data is subject to measurement uncertainties.  The latter is taken into account in the model of the data, i.e., we assume that
the tumour mass is known at a finite set of disjoint spatial locations $\{Q_k\}_{k=1}^N$ at a discrete set of time points $\{t_j\}_{j=1}^M$.  In particular, we are given the operators
\begin{equation*}
	m_{kj}:u\mapsto m_{kj}(u),\quad m_{kj}(u) = \int_{Q_k} u(t_j,x)\;\mathrm{d} x.
\end{equation*}
The observed data is then modelled as
\begin{equation*}
	y = \mathcal{M}(\gamma) + \eta,
\end{equation*}
where $\mathcal{M}:\Gamma\to\R^{MN}_+$ is the forward operator obtained from the composition of the solution operator $\gamma\mapsto u_\gamma$ to~\eqref{eq:PME_intro} with the matrix-valued operator $m = (m_{kj})$ mapping $u_\gamma\mapsto m(u)$; and $\eta \sim\mathcal{N}(0,\Sigma)$ is an observational noise, distributed normally with a give covariance matrix $\Sigma$. Assuming a Gaussian prior distribution for the parameter $\gamma\sim\mathcal{N}(m_0,\sigma_0^2)$,  the posterior distribution is given by
\begin{equation}\label{eq:Likelihood}
	\pi(\gamma|D) \propto e^{-V(\gamma)},\quad V(\gamma) =\frac12|y-\mathcal{M}(\gamma)|_{\Sigma}^2 +\frac{|\gamma - m_0|^2}{2\sigma_0^2}.
\end{equation}
The former term represents the likelihood of observing data $y$ given the value $\gamma$ of the parameter. It penalises deviations of the model prediction from the actual observations, scaled by the noise covarriance $\gamma$,
where $|y-\mathcal{M}(\gamma)|_{\Sigma}^2 = (y-\mathcal{M}(\gamma))^T\Sigma^{-1}(y-\mathcal{M}(\gamma))$. The latter term represents the prior knowledge about $\gamma$ and penalises its deviations from the mean $m_0$ scaled by the prior variance $\sigma_0^2$.

\subsection{Main results and assumptions}
As outlined above, our main concern in this paper is to establish Lipschitz continuity of the solutions to~\eqref{eq:PME_intro} with respect to the diffusion parameter $\gamma$. More specifically, we will work with mild solutions in the $L^1$-topology.
Our main result for equation~\eqref{eq:PME_intro} reads, somewhat informally, as follows.
\begin{theo}\label{thm:MainIntro}
Let $\gamma_1, \gamma_2 \in \Gamma \subset (1,\infty)$ and let $u_1, u_2$ be the corresponding mild solutions to equation~\eqref{eq:PME_intro} in $\mathbb{R}^d \times (0,T)$. The following stability estimate holds:
\begin{align*}
    \norm{u_1(t)-u_2(t)}_{L_1} \leq e^{tG(0)}\norm{u^0_1-u_2^0}_{L_1} + C_1te^{tG(0)}\norm{G_2-G_1}_{L^\infty} + C_2(\sqrt{dt}+t)e^{tG(0)}|\gamma_2-\gamma_1|,
\end{align*}
for any $t\geq 0$, where the constants $C_1$, $C_2$ depend on $\gamma_1, \gamma_2$ and the initial data; and $G(0):=\max{(G_1(0),G_2(0))}$.
\end{theo}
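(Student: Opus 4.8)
The plan is to prove the estimate by an $L^1$-contraction argument in the spirit of Kato and Kruzhkov, treating the mismatch in the exponents and in the reaction rates as source perturbations and closing with Gronwall's lemma. Throughout I would work with the strong/approximate solutions furnished by the mild-solution construction, regularise the sign function by a smooth $\sign_\delta$, and ultimately justify the formal computation rigorously by the doubling-of-variables technique, passing $\delta\to0$ to recover the Kato inequality in integrated form. Setting $w=u_1-u_2$ and differentiating $\norm{w(t)}_{L^1}$, I write the right-hand side as $\int \sign(w)\,\partial_t w$ and split the diffusion difference as
\begin{equation*}
\Delta u_1^{\gamma_1}-\Delta u_2^{\gamma_2} = \bigl(\Delta u_1^{\gamma_1}-\Delta u_2^{\gamma_1}\bigr)+\Delta\bigl(u_2^{\gamma_1}-u_2^{\gamma_2}\bigr).
\end{equation*}
The matched-exponent bracket is discarded by accretivity of the porous-medium operator: since $s\mapsto s^{\gamma_1}$ is nondecreasing, Kato's inequality gives $\int \sign(w)\bigl(\Delta u_1^{\gamma_1}-\Delta u_2^{\gamma_1}\bigr)\le0$.

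Next I would treat the reaction terms through the decomposition
\begin{equation*}
u_1G_1(p_1)-u_2G_2(p_2) = w\,G_1(p_1) + u_2\bigl(G_1(p_1)-G_1(p_2)\bigr) + u_2\bigl(G_1(p_2)-G_2(p_2)\bigr).
\end{equation*}
Pairing with $\sign(w)$, the first term contributes at most $G_1(0)\norm{w}_{L^1}$ using $G\le G(0)$; the second is nonpositive (up to an elementary lower-order correction accounting for the mismatch between the two pressure laws) by the assumed monotonicity of $G$; and the third is bounded by $\norm{u_2}_{L^1}\norm{G_1-G_2}_{L^\infty}$. Invoking the uniform-in-time mass bound $\sup_{[0,T]}\norm{u_2(s)}_{L^1}\le M$ produces, after the final Gronwall step, the term $C_1 t e^{tG(0)}\norm{G_2-G_1}_{L^\infty}$ with $C_1\sim M$ and the prefactor $e^{tG(0)}$.

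The crux is the diffusion-mismatch term $\Delta(u_2^{\gamma_1}-u_2^{\gamma_2})$. Here I would first extract the parameter difference by the mean value theorem in the exponent, writing $u_2^{\gamma_1}-u_2^{\gamma_2}=(\gamma_1-\gamma_2)\,u_2^{\xi}\ln u_2$ for some intermediate $\xi$, which yields the factor $\abs{\gamma_2-\gamma_1}$. Rather than bounding $\norm{\Delta(\cdots)}_{L^1}$ crudely, which would be far too lossy, I would exploit the divergence form of the Laplacian together with the $L^1\!\to\!L^1$ gradient smoothing of the heat semigroup, $\norm{\nabla e^{\tau\Delta}}_{L^1\to L^1}\sim\sqrt{d/\tau}$, in a Duhamel representation of the contribution of this source; the time integral $\int_0^t\sqrt{d/(t-s)}\,\mathrm{d}s=2\sqrt{dt}$ is precisely what generates the $\sqrt{dt}$ scaling, while the zeroth-order remainder of the expansion contributes the $+t$, giving $C_2(\sqrt{dt}+t)e^{tG(0)}\abs{\gamma_2-\gamma_1}$. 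Assembling the three contributions into the differential inequality $\frac{d}{dt}\norm{w}_{L^1}\le G(0)\norm{w}_{L^1}+M\norm{G_1-G_2}_{L^\infty}+\mathcal{E}(t)$ and applying Gronwall's lemma then closes the proof.

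I expect the main obstacle to be this last step. Extracting the $\sqrt{dt}$ rather than an $O(t)$ or worse bound hinges on using the structure of the Laplacian, and the heat-kernel route requires uniform-in-time a priori control of $u_2$ and, more delicately, of $u_2^{\xi}\ln u_2$ and its gradient near the free boundary $\{u_2=0\}$, where the logarithm is singular. Securing these estimates, and making the formal Kato computation rigorous for merely mild solutions through doubling of variables, is where the real work lies.
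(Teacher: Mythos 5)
Your overall architecture (Kato/Kruzhkov $L^1$-contraction, doubling of variables, treating the exponent and reaction mismatches as perturbations, Gronwall) matches the spirit of the paper, and your handling of the reaction terms is essentially the paper's decomposition. The genuine gap is in the crux step you yourself identify: the derivation of the $\sqrt{dt}\,|\gamma_2-\gamma_1|$ term. Your proposed Duhamel representation with the heat semigroup is not available here. After the splitting, the equation for $w=u_1-u_2$ has principal part $\Delta\bigl(u_1^{\gamma_1}-u_2^{\gamma_1}\bigr)$, which is a degenerate \emph{nonlinear} operator, not $\Delta w$; you cannot simultaneously discard it by accretivity (which requires pairing with $\sign(w)$) and represent the contribution of the source $\Delta\bigl(u_2^{\gamma_1}-u_2^{\gamma_2}\bigr)$ through $e^{(t-s)\Delta}$, since no linear semigroup governs the evolution of $w$. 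Within the Kato pairing the mismatch term appears as $\int\sign(w)\,\Delta\bigl(u_2^{\gamma_1}-u_2^{\gamma_2}\bigr)$, which cannot absorb the gradient-smoothing estimate $\norm{\nabla e^{\tau\Delta}}_{L^1\to L^1}\sim\sqrt{d/\tau}$; and even if it could, you would need a uniform $L^1$ bound on $\nabla\bigl(u_2^{\xi}\ln u_2\bigr)$, i.e.\ a $BV$ bound on a nonlinear function of $u_2$ near the free boundary, which is not established and is not what the theorem assumes (only $u_1^0\in BV$).

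The mechanism that actually produces $\sqrt{dt}$ in the paper is different and is the key idea you are missing. Working at the level of the resolvent $(I+\tau A_i)^{-1}$ (so that mild solutions, which need not be time-differentiable, are handled by iterating and passing to the Crandall--Liggett limit), the doubled diffusion terms are combined via Young's inequality,
\begin{equation*}
\vf_1'(u_1)\abs{\nabla_x u_1}^2+\vf_2'(u_2)\abs{\nabla_y u_2}^2\ \geq\ 2\sqrt{\vf_1'(u_1)\vf_2'(u_2)}\;\nabla_x u_1\cdot\nabla_y u_2,
\end{equation*}
so that after integrating by parts onto the mollifier $q^{\ve}(x-y)$ the entire diffusion mismatch collapses to $\sign(s-t)\bigl[\sqrt{\vf_1'(s)}-\sqrt{\vf_2'(s)}\bigr]^2$ evaluated at $s=u_1$. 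This is bounded by $\frac{2d\tau}{\ve}\sup_s\bigl[\sqrt{\vf_1'(s)}-\sqrt{\vf_2'(s)}\bigr]^2\norm{u_1^0}_{TV}$ using $\int\abs{\partial_{x_i}q^{\ve}}\,dy=2/\ve$, while un-doubling the variables costs a commutator of order $\ve\norm{u_1^0}_{TV}$. Optimizing $\ve\sim\sqrt{dt}\,\sup_s\abs{\sqrt{\vf_1'(s)}-\sqrt{\vf_2'(s)}}$ is what yields $\sqrt{dt}\,\abs{\gamma_2-\gamma_1}$ (the $+t$ part comes from $\norm{G_2'}_{L^\infty}\sup_s\abs{p_2(s)-p_1(s)}\lesssim\abs{\gamma_2-\gamma_1}$ in the reaction term, not from a zeroth-order Duhamel remainder). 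Your sketch cannot be repaired by refining the heat-kernel estimates; it needs this Cockburn--Gripenberg square-root device, together with the propagation of the $TV$ bound through the resolvent, to close.
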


In fact, we shall first establish a more general stability result for an abstract degenerate parabolic filtration equation with growth and then deduce the above theorem as a special case -- see Theorem~\ref{thm:Main1} and Corollary~\ref{thm:Main2} in Section~\ref{sec:Stability} for precise statements.
To this end, we discuss the following problem:
\eqq{
\partial_{t} u = \Delta \varphi (u) +(u)_{+
}G(p(u))\hd  \m{ in } \mathbb{R}^d \times (0,T), \hd  u(0) = u^{0}  \m{ in } \mathbb{R}^{d},
}{r1}
under the following assumptions
\eqq{
\vf \in C^{1}(\mathbb{R}), \hd \vf(0) = 0, \hd \vf \m{ is increasing},
}{A1}
\eqq{
G \in C([0,\infty)) \cap C^1((0,\infty)),  \hd G(0) > 0,  \hd G' \leq 0 \m{ on } (0,\infty),
}{A2}
\eqq{
p\in C(\mathbb{R}), \hd  \hd p  \m{ is nonnegative  and nondecreasing}.
}{A3}
Here, and throughout, $(u)_+ = \max(0,u)$ denotes the positive part of a function.
We are interested in the stability properties with respect to given functions $\vf,G$, and $p$ for mild solutions to (\ref{r1}).
Then, we apply the obtained result to the special case
\[
\vf_{i}(t) = \sign(t)|t|^{\gamma_i}, \hd p_{i}(t) = \frac{\gamma_i}{\gamma_i - 1}|t|^{\gamma_i - 1}, \hd \m{ for } i=1,2, \hd \gamma_2 > \gamma_1 >1,
\]
to obtain Lipschitz continuity of mild solutions to~\eqref{eq:PME_intro} in $L_{1}$- norm with respect to parameter $\gamma$, as stated in Theorem~\ref{thm:MainIntro}.

The mild solutions to equation~\eqref{r1} can be constructed, for instance, via the implicit time discretisation scheme:
\begin{equation*}
    u(t) = \lim_{n\to\infty}\left[\left(I + \frac{t}{n}A\right)^{-1}\right]^nu^0,
\end{equation*}
where $A(u):=-\Delta\varphi(u)-(u)_+G(p(u))$. We refer the reader to~\cite{Pavel} and~\cite[Chapter 10]{Vazquez} for detailed discussions. As mentioned previously, an important source of error in calculating the posterior distribution $\pi(\gamma|D)$ comes from it being computed from an approximate solution, for instance, the implicit time-discretised approximation, rather than the true analytical solution of the PDE. Therefore, it is desirable that the numerical scheme satisfies the same stability estimate as the analytical solutions of the equation.
To this end, we formulate a suitable stability estimate, which arises as a natural by-product of the proof of Theorem~\ref{thm:Main1}.
\begin{lemma}\label{thm:Numerical}
Let $\gamma_1, \gamma_2 >1$ and for $i=1,2$ let
$u_i^n$ be the piecewise constant function
\begin{equation*}
    u_i^n(t) = u_i^n \quad\text{for $t\in [n\tau, (n+1)\tau)$},
\end{equation*}
where
\begin{equation*}
    u^n_i = (I+\tau A)^{-1} u_i^{n-1},\quad\text{and}\quad u_i^0 = u_i^0.
\end{equation*}
Then, the same estimate as in Theorem~\ref{thm:MainIntro} holds for the difference $u_1^n(t) - u_2^n(t)$.
\end{lemma}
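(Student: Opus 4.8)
The plan is to extract the Lemma directly from the proof of Theorem~\ref{thm:Main1}, observing that the continuous estimate there is in fact established first for the implicit iterates and only afterwards passed to the Crandall--Liggett limit $n\to\infty$. Thus it suffices to freeze the argument before that limit. Writing $A_i(u)=-\Delta\vf_i(u)-(u)_+G_i(p_i(u))$ and $v_i=(I+\tau A_i)^{-1}w_i$, the one-step resolvent identity reads $v_i-\tau\Delta\vf_i(v_i)-\tau(v_i)_+G_i(p_i(v_i))=w_i$. I would subtract the two identities, multiply by a smooth nondecreasing approximation $\sign_\ve$ of $\sign(v_1-v_2)$, integrate over $\R^d$, and let $\ve\to0$.

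Three terms must then be controlled. For the diffusion term I split $\vf_1(v_1)-\vf_2(v_2)=[\vf_1(v_1)-\vf_1(v_2)]+R$ with $R:=\vf_1(v_2)-\vf_2(v_2)$; by \eqref{A1} the first bracket has the sign of $v_1-v_2$, so Kato's inequality $\int_{\R^d}\Delta w\,\sign(w)\le0$ makes its contribution nonpositive, leaving the consistency error carried by $R$. For the reaction term, \eqref{A2}--\eqref{A3} give that $u\mapsto(u)_+G_i(p_i(u))$ has one-sided slope at most $G_i(0)\le G(0)$ (since $G_i'\le0$, $p_i'\ge0$), so its monotone part is bounded by $\tau G(0)\norm{v_1-v_2}_{L^1}$, while the residual $\tau\int_{\R^d}(v_2)_+\,\abs{G_1(p_1(v_2))-G_2(p_2(v_2))}$ is estimated through $\norm{G_1-G_2}_{L^\infty}$, the Lipschitz constant of $G_2$, $\abs{p_1-p_2}$, and the mass bound on $(v_2)_+$. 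Altogether this yields the one-step inequality $(1-\tau G(0))\norm{v_1-v_2}_{L^1}\le\norm{w_1-w_2}_{L^1}+\tau\,\mathcal E^{\mathrm{diff}}+\tau\,\mathcal E^{\mathrm{reac}}$.

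Iterating with $\tau=t/n$ and a discrete Gronwall argument gives $\norm{u_1^n-u_2^n}_{L^1}\le(1-\tau G(0))^{-n}\norm{u_1^0-u_2^0}_{L^1}+\sum_k(1-\tau G(0))^{-k}\tau\,\mathcal E_k$, where the prefactor $(1-\tau G(0))^{-n}$ increases to $e^{tG(0)}$ as $n\to\infty$, so the finite-$n$ bound carries exactly the natural discrete counterpart of the factor $e^{tG(0)}$ of Theorem~\ref{thm:MainIntro}. The reaction errors sum to the $C_1 t\,e^{tG(0)}\norm{G_2-G_1}_{L^\infty}$ term, and in the power-law case the pressure discrepancy $\abs{p_1-p_2}\lesssim\abs{\gamma_2-\gamma_1}$ produces the companion $C_2 t\,e^{tG(0)}\abs{\gamma_2-\gamma_1}$ contribution; the $L^1$ mass $\norm{(v_2^k)_+}_{L^1}$, controlled by the reaction-amplified initial mass, supplies the constants $C_1,C_2$.

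The crux, and the step I expect to be the main obstacle, is the diffusion consistency error $\mathcal E^{\mathrm{diff}}$, built from $\Delta R_k$: summed naively it would only give a term linear in $t$, whereas the sharp $\sqrt{dt}$ scaling of Theorem~\ref{thm:MainIntro} reflects parabolic smoothing. To recover it I would represent the accumulated error by a discrete Duhamel formula against the $L^1$-contractive resolvent semigroup generated by the linearised diffusion $-\Delta(a\,\cdot)$, with $a=[\vf_1(v_1)-\vf_1(v_2)]/(v_1-v_2)\ge0$, and invoke the parabolic gradient-smoothing bound, whose integral over the step index behaves like $\int_0^t\sqrt{d/\sigma}\,d\sigma=2\sqrt{dt}$, against $\norm{\nabla R_k}_{L^1}$; for $\vf_i=\sign(\cdot)\abs{\cdot}^{\gamma_i}$ one has $\norm{\nabla R_k}_{L^1}\lesssim\abs{\gamma_2-\gamma_1}$ uniformly in $k$. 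The delicate points are the uniform-in-$n$ justification of this smoothing (equivalently, a global discrete dissipation estimate $\sum_k\tau\norm{\nabla\vf_i(v_i^k)}_{L^2}^2\le C$ followed by Cauchy--Schwarz in the step index, the route that avoids pointwise kernel bounds for the degenerate operator) together with a uniform gradient/BV bound on the iterates; since all of these estimates are performed on the fixed iterates $u_i^n$ and the limit $n\to\infty$ is invoked only to replace $(1-\tau G(0))^{-n}$ by $e^{tG(0)}$, the resulting bound already holds for $u_1^n(t)-u_2^n(t)$, which is precisely the assertion of the Lemma.
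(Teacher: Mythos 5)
You correctly identify the paper's top-level structure: the stability bound of Theorem~\ref{thm:Main1} is proved first for the implicit iterates (estimate \eqref{discrstab1}) and only afterwards passed to the Crandall--Liggett limit, so the Lemma follows by stopping before that limit and noting that $(1-\tfrac{t}{n}G(0))^{-n}$ increases to $e^{tG(0)}$. That part of your argument matches the paper exactly.

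The gap is in your one-step estimate for the diffusion mismatch. The paper does not test the difference of the two resolvent identities against $\sign_\ve(v_1-v_2)$ in a single variable; it doubles the variables, tests against $S_\eta(u_1(x)-u_2(y))\,q^{\ve}(x-y)$, and completes the square $\vf_1'(u_1)\abs{\nabla_x u_1}^2+\vf_2'(u_2)\abs{\nabla_y u_2}^2\ge 2\sqrt{\vf_1'(u_1)\vf_2'(u_2)}\,\nabla_x u_1\cdot\nabla_y u_2$, which is precisely what produces the per-step error $\frac{2d\tau}{\ve}\sup_s\bigl[\sqrt{\vf_1'(s)}-\sqrt{\vf_2'(s)}\bigr]^2\norm{f_1}_{TV}$ in \eqref{estepsilon}; the $\sqrt{dt}$ then comes from optimising $\ve=\sqrt{dn\tau}\,\sup_s\abs{\sqrt{\vf_1'(s)}-\sqrt{\vf_2'(s)}}$ against the $\ve\norm{u_1^0}_{TV}$ cost of undoing the doubling. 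Your single-variable decomposition $\vf_1(v_1)-\vf_1(v_2)+R$ leaves the term $\tau\int\Delta R\,\sign_\ve(v_1-v_2)$, which does not close: integrating by parts produces $\nabla R\cdot S_\ve'(v_1-v_2)\nabla(v_1-v_2)$, uncontrolled as the regularisation is removed, with no compensating quadratic structure. Your proposed repair --- a discrete Duhamel representation against the semigroup of the linearised diffusion plus an $L^1\to W^{1,1}$ parabolic smoothing bound --- is not available here: the coefficient $a=[\vf_1(v_1)-\vf_1(v_2)]/(v_1-v_2)$ degenerates wherever the solutions vanish (the generic situation for compactly supported data in the porous-medium regime), so no uniform $t^{-1/2}$ gradient decay holds, and the dissipation estimate you invoke controls $\nabla\vf_i(v_i)$ in $L^2$ rather than the kernel bounds you would need; the uniform bound $\norm{\nabla R_k}_{L^1}\lesssim\abs{\gamma_2-\gamma_1}$ is likewise not established anywhere (the paper only propagates $TV$ bounds of the iterates via \eqref{tv}). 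In short, the skeleton is right, but the essential mechanism --- doubling of variables, the square-root completion, and the final optimisation in $\ve$ --- is missing and cannot be replaced by the linearised-semigroup argument you sketch.
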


Since the functions $u^n$ converge as $n\to\infty$ to the unique mild solution of~\eqref{eq:PME_intro} strongly in $L_1$, we can compare the posterior $\pi^n(\gamma|D)$ arising from a numerical approximation and the one arising from the analytical solution. Indeed, using the forward operator $\mathcal{M}$ as in~\eqref{eq:Likelihood} and the form of the operator $m$, it easily follows that $\pi^n(\gamma|D)$ converges to $\pi(\gamma|D)$ in $TV$. (Granted, using the implicit time discretisation still leads to solving an infinite-dimensional problem and is not optimal in terms of computation time, but we leave employing a more practical numerical method for further study.) More precisely, we have the following result.
\begin{lemma}(Stability of the posterior distribution)
    Suppose that the parameter space $\Gamma$ is bounded. The following estimate holds
    \begin{equation}\label{Like}
        \norm{\pi(\gamma|D)-\pi^n(\gamma|D)}_{TV} \leq C\frac{1}{\sqrt{n}}.
    \end{equation}
\end{lemma}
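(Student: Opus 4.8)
The plan is to reduce the total-variation bound to a uniform estimate on the discrepancy between the two negative log-posteriors, and then to invoke the $L_1$-convergence rate of the implicit Euler scheme. Write $V^n(\gamma) = \frac12\abs{y-\mathcal{M}^n(\gamma)}_\Sigma^2 + \frac{\abs{\gamma-m_0}^2}{2\sigma_0^2}$ for the perturbed potential, where $\mathcal{M}^n = m\circ(\gamma\mapsto u_\gamma^n)$ is the forward map built from the time-discretised approximation $u_\gamma^n$ rather than the exact mild solution $u_\gamma$. Both posteriors then have the Gibbs form $\pi(\gamma|D) = e^{-V(\gamma)}/Z$ and $\pi^n(\gamma|D) = e^{-V^n(\gamma)}/Z^n$, with $Z=\int_\Gamma e^{-V}$ and $Z^n = \int_\Gamma e^{-V^n}$, and the only change between them is the likelihood.

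First I would establish the elementary perturbation inequality for Gibbs measures: if $\sup_{\gamma\in\Gamma}\abs{V(\gamma)-V^n(\gamma)}\le\ve_n$, then $\norm{\pi(\cdot|D)-\pi^n(\cdot|D)}_{TV}\le C\ve_n$. This follows by writing $\pi-\pi^n = (e^{-V}-e^{-V^n})/Z + e^{-V^n}(Z^n-Z)/(ZZ^n)$, using the bound $\abs{e^{-V}-e^{-V^n}}\le e^{-\min(V,V^n)}\abs{V-V^n}$ together with $\abs{Z-Z^n}\le\int_\Gamma\abs{e^{-V}-e^{-V^n}}$, and integrating over $\Gamma$. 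Here the boundedness of $\Gamma$ is used twice: it guarantees that the Lebesgue integrals over $\Gamma$ are finite, and, together with $V,V^n\ge 0$, that $Z$ and $Z^n$ are bounded above and below by positive constants independent of $n$.

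Next I would bound $\ve_n$. Since the data-misfit $a\mapsto\frac12\abs{y-a}_\Sigma^2$ is locally Lipschitz and the forward maps $\mathcal{M}(\gamma),\mathcal{M}^n(\gamma)$ remain in a fixed bounded subset of $\R^{MN}_+$ (the solutions have controlled mass and $\Gamma$ is bounded), while the prior terms are identical and cancel, one obtains $\abs{V(\gamma)-V^n(\gamma)}\le C\abs{\mathcal{M}(\gamma)-\mathcal{M}^n(\gamma)}$ uniformly in $\gamma$. Each component of the difference is $m_{kj}(u_\gamma)-m_{kj}(u_\gamma^n) = \int_{Q_k}\bigl(u_\gamma - u_\gamma^n\bigr)(t_j,x)\,\mathrm{d}x$, so $\abs{m_{kj}(u_\gamma)-m_{kj}(u_\gamma^n)}\le\norm{u_\gamma(t_j)-u_\gamma^n(t_j)}_{L_1}$ because $m_{kj}$ is a unit-norm functional on $L_1$. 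Hence $\ve_n\lesssim\max_{j}\norm{u_\gamma(t_j)-u_\gamma^n(t_j)}_{L_1}$.

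Finally, the rate $\ve_n\lesssim 1/\sqrt{n}$ is precisely the Crandall–Liggett error estimate for the exponential/implicit-Euler formula $u_\gamma^n = (I+\tfrac{t}{n}A_\gamma)^{-n}u^0$ approximating the mild solution $u_\gamma(t)=S(t)u^0$; combined with the previous steps this yields~\eqref{Like}. The main obstacle is to make this rate \emph{uniform} over $\gamma\in\Gamma$: the Crandall–Liggett constant depends on the nonlinear operator $A_\gamma = -\Delta\vf_\gamma(\cdot) - (\cdot)_+G(p_\gamma(\cdot))$ through a bound on $\norm{A_\gamma u^0}_{L_1}$ and its accretivity structure, and one must verify — using the explicit power-law form of $\vf_\gamma$ and $p_\gamma$ together with the a priori bounds already obtained in the proof of Theorem~\ref{thm:Main1} — that these quantities are controlled uniformly when $\gamma$ ranges over the bounded parameter set, in particular staying away from the degenerate regime $\gamma\to 1$.
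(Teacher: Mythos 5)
Your proposal is correct and follows essentially the same route as the paper: reduce the total-variation distance to a uniform bound on $\abs{V(\gamma)-V^n(\gamma)}$ using boundedness of the potential on the bounded set $\Gamma$, control that difference by $\norm{u_\gamma-u_\gamma^n}_{L_1}$ through the functionals $m_{kj}$, and conclude with the Crandall--Liggett rate $t/\sqrt{n}$ together with the accretivity established in Proposition~\ref{Atheo}. You are in fact somewhat more careful than the paper in spelling out the Gibbs-measure perturbation inequality and in flagging that the Crandall--Liggett constant must be controlled uniformly for $\gamma$ in the bounded parameter set, a point the paper leaves implicit.
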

\begin{proof}
    Using formula~\eqref{eq:Bayes} we obtain the following estimate
    \begin{align}\label{Like1}
        \norm{\pi(\gamma|D)-\pi^n(\gamma|D)}_{TV} \leq 2\frac{\int_{\Gamma}|\pi(D|\gamma)-\pi^n(D|\gamma)|\;\mathrm{d}\pi(\gamma)}{\int_{\Gamma}\pi(D|\gamma)\;\mathrm{d}\pi(\gamma)}.
    \end{align}
    Next, from~\eqref{eq:Likelihood} we deduce that the likelihood function potential is bounded both from below and from above. Finally, we observe that
    \begin{equation}\label{Like2}
        |V(\gamma) - V^n(\gamma)| \leq C\sum_{i,j}|m_{ij}(u_\gamma) - m_{ij}(u^n_\gamma)| \leq C \norm{u-u^n}_{L_1(\R^d)} \leq C\frac{1}{\sqrt{n}},
    \end{equation}
    where the last inequality follows from Theorem~\ref{CL}, cf.~\cite{CL}, which provides the explicit rate for the convergence $u^n\to u$, and Proposition~\ref{Atheo}, which guarantees that the operator $A(u)=-\Delta \vf(u) - (u)_{+}G(p(u))$ is $G(0)$-accretive and satisfies the assumptions of Theorem~\ref{CL}. Consequently, since $
    \pi(\gamma)$ is a probability measure, from~\eqref{Like1} and~\eqref{Like2} we deduce~\eqref{Like}.
\end{proof}

\subsection{Structure of the paper}
The rest of the paper is devoted to proving Theorem~\ref{thm:MainIntro} and is organised as follows. In Section \ref{sec:RegularResolvent} we regularise problem~\eqref{r1} by considering a smoother function $\varphi$. In this setting we analyse the resolvent operator $(I+\tau A)^{-1}$. In particular, we use the doubling of variables method to derive a localised stability estimate for differences of functions $(I+\tau A_i)^{-1}f_i$, see~\eqref{estepsilon}. This requires one of the $f_i$ to belong to the $BV$ space -- later, such regularity will be required of one of the initial data.
In Section~\ref{sec:Resolvent} we translate the key results of Section~\ref{sec:RegularResolvent} to the resolvent operator of problem~\eqref{r1} without any regularisations. This is achieved by constructing an appropriate mollification of the function $\varphi$. In particular, we show that the operator $A$ is $G(0)$-accretive and that the range of $I+\tau A$ is the entire space $L_1$ for $\tau>0$ sufficiently small.
This guarantees that the equation $u_t = Au$ has a unique mild solution for each given initial condition $u^0$. Finally, in Section~\ref{sec:Stability} we prove the main stability result for the mild solutions of equation~\eqref{r1} and then deduce Theorem~\ref{thm:MainIntro} as a special case. The main theorem is proved by introducing the implicit time discretisation, as mentioned above, and thus approximating the evolution operator by the resolvent operator. We conclude the proof by using the stability result from Section~\ref{sec:RegularResolvent} and passing to the limit with the time discretisation.
Let us point out that this strategy is largely inspired by the similar approach of Cockburn and Gripenberg in~\cite{Grip}, who consider an abstract degenerate diffusion problem as in~\eqref{r1} with a hyperbolic term rather than a reaction term.

\section{The resolvent operator in a regularised setting}
\label{sec:RegularResolvent}
We begin by establishing some crucial properties of the resolvent operator in a regularised setting, i.e., with additional regularity of the function $\varphi$ in~\eqref{r1}.
Since it is enough for our applications, we assume that $\vf$ is increasing so that $A$ is single-valued.

\begin{prop}\label{image}
Suppose that $\vf \in C^{2}(\mathbb{R})$, $\vf(0)=0$ and $\vf' > 0$, and that $G$ satisfies (\ref{A2}) and $p$ satisfies (\ref{A3}). We define the operator $A_{\vf,G,p}:D(A) \subset  L_{1}(\mathbb{R}^{d})\cap L_{\infty}(\mathbb{R}^{d}) \rightarrow L_{1}(\mathbb{R}^{d})\cap L_{\infty}(\mathbb{R}^{d})$ with
\[
D(A) = \{ u \in L_{1}(\mathbb{R}^{d})\cap L_{\infty}(\mathbb{R}^{d}) \cap H^{1}(\mathbb{R}^{d}):  \vf
(u) \in H^{2}(\mathbb{R}^{d}) \}
\]
as
\[
A(u)\equiv A_{\vf,G,p}(u) =  -\Delta \vf(u) - (u)_{+}G(p(u)).
\]
Then for every $\tau < \frac{1}{G(0)}$
\begin{enumerate}
\item \[R(I+\tau A) \supset  L_{1}(\mathbb{R}^{d})\cap L_{\infty}(\mathbb{R}^{d}),\]
\item for every $f\in L_{r}(\mathbb{R}^d)$, $r\in [1,\infty]$,

\[ \norm{((I+\tau A)^{-1}f)_{+}}_{L_{r}(\mathbb{R}^d)} \leq \frac{1}{1-\tau G(0)}\norm{f_{+}}_{L_{r}(\mathbb{R}^d)}, \hd \hd \norm{((I+\tau A)^{-1}f)_{-}}_{L_{r}(\mathbb{R}^d)} \leq \norm{f_{-}}_{L_{r}(\mathbb{R}^d)},
\]
\item and \[ - \norm{f_{-}}_{L_{\infty}(\mathbb{R}^d)} \leq(I+\tau A)^{-1}f \leq \frac{1}{1-\tau G(0)}\norm{f_{+}}_{L_{\infty}(\mathbb{R}^d)} \m{ a.e. on } \hd \mathbb{R}^d.\]
\end{enumerate}
\end{prop}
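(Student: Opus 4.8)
The plan is to treat the three assertions together but in a deliberate order: I would first derive the a~priori $L_{r}$ and $L_{\infty}$ bounds of parts (2) and (3) for \emph{any} solution of the resolvent equation, and only afterwards establish the surjectivity claim (1), since those bounds furnish the invariant region that will confine the fixed-point argument needed for existence. Throughout, the identity $u=(I+\tau A)^{-1}f$ is read as the elliptic equation
\[
u - \tau\Delta\vf(u) - \tau (u)_{+} G(p(u)) = f \quad\text{in } \R^{d}.
\]
The one structural fact I would use repeatedly is that, because $p\ge 0$ and $G$ is nonincreasing, $G(p(u))\le G(0)$ pointwise, and that the reaction term $(u)_{+}G(p(u))$ \emph{vanishes on the set $\{u<0\}$}.

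For the pointwise bounds (3) I set $M=\frac{1}{1-\tau G(0)}\norm{f_{+}}_{L_{\infty}}$ and test the equation against $(u-M)_{+}$. Since $\vf$ is increasing, integration by parts renders the diffusion term nonnegative on $\{u>M\}$; using $G(p(u))\le G(0)$ together with $\tau<1/G(0)$ and $f\le\norm{f_{+}}_{L_{\infty}}=M(1-\tau G(0))$ then yields $\int_{\{u>M\}}(u-M)^{2}\le 0$, so that $(u-M)_{+}\equiv 0$ and $u\le M$. For the lower bound I would test against $(u+\norm{f_{-}}_{L_{\infty}})_{-}$ and exploit that the reaction term drops out where $u<0$, so that $u$ obeys the pure filtration comparison $u\ge -\norm{f_{-}}_{L_{\infty}}$. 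The $L_{r}$ estimates (2) follow the same template: testing against $(u)_{+}^{r-1}$, respectively $-(u)_{-}^{r-1}$ (and against the $\sign$ functions when $r=1$, with Kato's inequality controlling the Laplacian), produces a nonnegative diffusion contribution after integration by parts because $\vf'>0$; bounding $G(p(u))\le G(0)$ then gives the factor $(1-\tau G(0))^{-1}$ on the positive part, while the vanishing of the reaction on $\{u<0\}$ gives the clean contraction $\norm{u_{-}}_{L_{r}}\le\norm{f_{-}}_{L_{r}}$. The case $r=\infty$ in (2) is merely a restatement of (3).

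The genuinely hard part is the surjectivity statement (1). I would recast it as a fixed-point problem: writing $Bu=-\Delta\vf(u)$, whose resolvent $(I+\tau B)^{-1}$ is well defined and order preserving (after the change of variable $z=\vf(u)$ the equation $-\tau\Delta z+\vf^{-1}(z)=g$ is monotone and coercive, hence solvable by the Browder--Minty method with the stated $H^{2}$ regularity), a solution of (1) is exactly a fixed point of $\Phi(v)=(I+\tau B)^{-1}\!\big(f+\tau (v)_{+}G(p(v))\big)$. The obstruction is that on the invariant region supplied by (2)--(3) the reaction $v\mapsto (v)_{+}G(p(v))$ is only continuous and bounded: since $p$ is merely continuous and $G$ only $C^{1}$ on $(0,\infty)$, it need not be Lipschitz, so a Banach contraction is unavailable. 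My plan is therefore to obtain the fixed point by compactness rather than contraction. I would first solve on balls $B_{R}$ with homogeneous boundary data, where elliptic regularity for $(I+\tau B)^{-1}$ supplies $H^{2}$ (hence Rellich-compact) bounds, so that $\Phi$ is a continuous, compact self-map of the convex $L_{\infty}$-ball cut out by (3) and Schauder's theorem yields a solution $u_{R}$; crucially the estimates (2)--(3) are uniform in $R$. I would then let $R\to\infty$, using the local compactness coming from the uniform $H^{2}$ bound on $\vf(u_{R})$ to pass to the limit in the weak formulation, and the uniform $L_{1}$ bound to rule out escape of mass, thereby identifying a limit $u\in L_{1}\cap L_{\infty}$ with $\vf(u)\in H^{2}(\R^{d})$ solving (1). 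An equivalent route I would keep in reserve is to mollify $G,p$ to Lipschitz $G_{k},p_{k}$, solve the regularised fixed point by contraction, and then remove the regularisation via the same compactness. Either way, the technical heart of the matter is reconciling the non-Lipschitz reaction term with the loss of compactness on the unbounded domain $\R^{d}$.
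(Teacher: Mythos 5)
Your a priori estimates in parts (2)--(3) are essentially the paper's: the authors also obtain them by testing the resolvent equation with monotone truncations (they work in the variable $v=\vf(u)$ and test with smooth nondecreasing $K(v)$ approximating $(\vf^{-1}(v))_{+}^{s-1}$, which is your $(u)_{+}^{r-1}$ in disguise), and they likewise exploit that the reaction vanishes on $\{u<0\}$ and that $G(p(u))\le G(0)$. Where you genuinely diverge is part (1). The paper does \emph{not} decouple the reaction term: it sets $v=\vf(u)$, defines $B(v)=\vf^{-1}(v)-\tau(\vf^{-1}(v))_{+}G(p(\vf^{-1}(v)))-\Delta v$ as a map $H^{1}(\R^{d})\to H^{-1}(\R^{d})$ (after a harmless truncation of $\vf,G,p$ justified by the $L_{\infty}$ bounds), proves that $B$ is pseudomonotone and coercive --- the coercivity being exactly where $\tau G(0)<1$ enters, via $\langle H(v),v\rangle\ge(1-\tau G(0))\int\vf^{-1}(v)v$ --- and invokes the Browder--Br\'ezis surjectivity theorem from Lions in one shot on all of $\R^{d}$. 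This buys them freedom from the two technical burdens your route carries: no exhaustion by balls $B_{R}$ and no separate invariant-region bookkeeping for a fixed-point map. Your Schauder-plus-exhaustion scheme is viable and more elementary (it avoids the pseudomonotonicity machinery), but note one concrete imprecision: assumption \eqref{A2} does \emph{not} force $G\ge 0$ (only $G(0)>0$ and $G'\le 0$, and in tumour models $G$ does change sign), so the box $[-\norm{f_{-}}_{L_{\infty}},\tfrac{1}{1-\tau G(0)}\norm{f_{+}}_{L_{\infty}}]$ from (3) is \emph{not} in general invariant under your decoupled map $\Phi(v)=(I+\tau B)^{-1}(f+\tau(v)_{+}G(p(v)))$: where $G(p(v))<0$ the source term can push $\Phi(v)$ below $-\norm{f_{-}}_{L_{\infty}}$. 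You must enlarge the lower edge of the box to something like $-\norm{f_{-}}_{L_{\infty}}-\tau b\,\sup_{0\le s\le b}\bigl(sG(p(s))\bigr)_{-}$ with $b=\tfrac{1}{1-\tau G(0)}\norm{f_{+}}_{L_{\infty}}$, run Schauder there, and only afterwards recover the sharp bound (3) for the actual fixed point by your testing argument. With that repair, and with the standard identification of the nonlinear limits as $R\to\infty$ via local $H^{2}$ compactness of $\vf(u_{R})$, your argument goes through.
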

\begin{proof}
We define an operator $B:H^{1}(\mathbb{R}^d) \rightarrow H^{-1}(\mathbb{R}^d)$ by
\[
B(v) = \vf^{-1}(v) - \tau (\vf^{-1}(v))_{+}G(p(\vf^{-1}(v))) - \Delta v.
\]
Without loss of generality, we may assume that $\vf$, $\vf'$, $\vf''$, $G$, $G'$, $p$ are bounded since we deal only with functions that satisfy some a priori given $L_{\infty}$ bound. Clearly, $B$ is bounded form $H^{1}(\mathbb{R}^d)$ to $H^{-1}(\mathbb{R}^d)$. We will show that $B$  is pseudomonotone and coercive. Then, from \cite[Theorem 2.7, p. 180]{Lions}, we will deduce that $B$ is surjective.

To show that $B$ is pseudomonotone we take $v_j \rightharpoonup v$ in $H^{1}(\mathbb{R}^{d})$ and we show that for every $u \in H^{1}(\mathbb{R}^{d})$
\[
\liminf_{j\to\infty} \left\langle B(v_j),v_j-u \right\rangle \geq  \left\langle B(v),v-u \right\rangle.
\]
Firstly, for every $u \in H^{1}(\mathbb{R}^{d})$, we observe
\begin{align*}
\liminf_{j\to\infty} \left\langle \Delta v_j,v_j - u
\right\rangle = - \liminf_{j\to\infty} \int_{\mathbb{R}^d} \nabla v_j \cdot \nabla(v_j-u)dx & = - \liminf_{j\to\infty} \norm{\nabla v_j}_{L_{2}(\mathbb{R}^{d})}^{2} + \lim_{j\to\infty} \int_{\mathbb{R}^d} \nabla v_j \cdot \nabla u dx\\
&\leq -\norm{\nabla v}_{L_{2}(\mathbb{R}^{d})}^{2} + \int_{\mathbb{R}^d} \nabla v \cdot \nabla u dx = \left\langle \Delta v,v - u
\right\rangle,
\end{align*}
where we applied weak lower semicontinuity of the norm.
Secondly, we will show that
\begin{equation}\label{eq:liminf_j}
\liminf_{j\to\infty} \left\langle \vf^{-1}(v_j) - (\vf^{-1}(v_j))_{+}\tau G(p(\vf^{-1}(v_j))),v_j \right\rangle \geq  \left\langle \vf^{-1}(v) - ( \vf^{-1}(v))_{+}\tau G(p(\vf^{-1}(v))),v \right\rangle,
\end{equation}
and, for every $u \in H^{1}(\mathbb{R}^d)$,
\begin{equation}\label{eq:pseudomonotone1}
\lim_{j\to\infty} \left\langle \vf^{-1}(v_j) - (\vf^{-1}(v_j))_{+}\tau G(p(\vf^{-1}(v_j))),u \right\rangle =  \left\langle \vf^{-1}(v) - ( \vf^{-1}(v))_{+}\tau G(p(\vf^{-1}(v))),u \right\rangle.
\end{equation}
Let us denote $H(t) := \vf^{-1}(t) - (\vf^{-1}(t))_{+}\tau G(p(\vf^{-1})(t))$. Then $H(t)t \geq 0$ for every $t \in \mathbb{R}$.
Since $v_j \rightarrow v$ in $L_{2,\mathrm{loc}}(\mathbb{R}^d)$, there exists a subsequence such that $v_{j_k} \rightarrow v$ almost everywhere. Hence, by Fatou's lemma $\lim\inf_{k\to\infty} \left\langle H(v_{j_k}),v_{j_k} \right\rangle \geq \left\langle H(v),v \right\rangle $. We claim that this inequality holds for the entire sequence. Indeed, suppose that there exists another subsequence $j_n$ such that $\liminf_{n\to\infty} \left\langle H(v_{j_n}),v_{j_n} \right\rangle < \left\langle H(v),v \right\rangle$. But since $\norm{v_{j_n}}_{H^{1}(\mathbb{R}^d)}$ is bounded, there exists a further subsequence $j_{n_{l}}$ such that $v_{j_{n_{l}}}\rightarrow v$ a.e and we can apply Fatou's lemma again, which leads to a contradiction. Thus, $\liminf_{j\to\infty} \left\langle H(v_{j}),v_{j} \right\rangle \geq \left\langle H(v),v \right\rangle$, so \eqref{eq:liminf_j} holds true.
To show equality~\eqref{eq:pseudomonotone1}, we note that
\[
\abs{H(v_j)} \leq |\vf^{-1}(v_j)|(1+\tau G(0)) \leq 2|\vf^{-1}(v_j)| \leq 2\norm{(\vf^{-1})'}_{L_{\infty}(\mathbb{R})}|v_j|.
\]
Hence, $\{H(v_j)\}$ is bounded in $L_{2}(\mathbb{R}^d)$, thus there exists a subsequence  $v_{j_k} \to v $ a.e. in $\mathbb{R}^d$,  such that, by continuity of $H$,  $H(v_{j_k}) \rightharpoonup H(v)$ in $L_{2}(\mathbb{R}^{d})$. Since, in this way for every subsequence of $\{H(v_j)\}$ we may choose a subsequence weakly in $L_2(\mathbb{R}^d)$ convergent to $H(v)$, we  infer that
$\lim_{j\to\infty}\left\langle H(v_{j}),u \right\rangle = \left\langle H(v),u \right\rangle $ for every $u \in H^{1}(\mathbb{R}^d)$.

To show that $B$ is coercive we note that due to $\vf^{-1}(t)t \geq 0$ and $\vf^{-1}(0) = 0$, we have
\begin{align*}
    \left\langle H(v),v\right\rangle &= \left\langle \vf^{-1}(v) - ( \vf^{-1}(v))_{+}\tau G(p(\vf^{-1}(v))),v \right\rangle\\
     &= \int_{\mathbb{R}^d}  \vf^{-1}(v) vdx - \tau \int_{\mathbb{R}^d}  ( \vf^{-1}(v))_{+} G(p(\vf^{-1}(v))) (v)_{+} dx\\
    &\geq \int_{\mathbb{R}^d}  \vf^{-1}(v) vdx - \tau G(0)\int_{\mathbb{R}^d}  ( \vf^{-1}(v))_{+} (v)_{+} dx\\
    &\geq (1-\tau G(0))\int_{\mathbb{R}^d}  \vf^{-1}(v) vdx \geq \frac{(1-\tau G(0))}{\norm{\vf'}_{L_{\infty}(\mathbb{R})}} \norm{v}_{L_{2}(\mathbb{R}^{d})}^{2}.
\end{align*}

Hence, by \cite[Theorem 2.7]{Lions}, for every $f \in H^{-1}(\mathbb{R}^d)$ there exists exactly one $v \in H^{1}(\mathbb{R}^{d})$ such that $B(v) = f$. Now suppose $f\in L_1(\R^d)\cap L_\infty(\R^d)$ and let us show that if $u = \vf^{-1}(v)$, then for every $s \in [1,\infty]$
\[
\norm{u_{+}}_{L_{s}(\mathbb{R}^{d})} \leq \frac{1}{1-\tau G(0)}\norm{f_{+}}_{L_{s}(\mathbb{R}^{d})}, \hd \hd \norm{u_{-}}_{L_{s}(\mathbb{R}^{d})} \leq \norm{f_{-}}_{L_{s}(\mathbb{R}^{d})}.
\]
Let us multiply the equation $B(v) = f$ by $K(v)$, where $K$ is a nondecreasing, compactly supported smooth function such that $K(0)=0$. Then $K(v) \in H^{1}(\mathbb{R}^d)$ and we may integrate by parts to obtain
\[
\int_{\mathbb{R}^d} [\vf^{-1}(v)-\tau (\vf^{-1}(v))_{+} G(p(\vf^{-1}(v)))]K(v)dx + \tau \int_{\mathbb{R}^d} \abs{\nabla v}^{2}K'(v)dx = \int_{\mathbb{R}^d} fK(v)dx.
\]
Note that the second term on the left-hand side is nonnegative. Now, for $K$ we take elements of a sequence of functions converging to $(\vf^{-1}(\cdot))_{+}^{s-1}$ from below, where $s \in [1,\infty)$. We arrive at
\[
(1-\tau G(0))\int_{\mathbb{R}^d} (\vf^{-1}(v))^{s}_{+}dx\leq \int_{\mathbb{R}^d} f(\vf^{-1}(v))_{+}^{s-1}dx.
\]
Using $\vf^{-1}(v) = u$ and H\"older's inequality we obtain
\[
\norm{u_{+}}_{L_{s}(\mathbb{R}^d)} \leq \frac{1}{(1-\tau G(0))}\norm{f_{+}}_{L_{s}(\mathbb{R}^d)},\quad\forall s\in[1,\infty).
\]
Passing to the limit with $s$ to infinity, we obtain the result for the sup-norm.
To obtain the result for $u_{-}$ we multiply $B(v)=f$ by $-K(v)$, where $K$ is now nonincreasing. Then
\[
\int_{\mathbb{R}^d}[\vf^{-1}(v)-\tau (\vf^{-1}(v))_{+}G(p(\vf^{-1}(v)))](-K(v))dx  -\tau \int_{\mathbb{R}^d} \abs{\nabla v}^{2}K'(v)dx = \int_{\mathbb{R}^d} (-f)K(v)dx.
\]
Choosing for $K(v)$ the functions approximating $(-\vf^{-1}(v))_{+}^{s-1} =(\vf^{-1}(v))_{-}^{s-1}$,
we get
\[
\int_{\mathbb{R}^d}(\vf^{-1}(v))_{-}^{s} dx \leq \int_{\mathbb{R}^d} (-f) (\vf^{-1}(v))_{-}^{s-1}dx,
\]
and thus
\[
\norm{u_{-}}_{L_{s}(\mathbb{R}^d)} \leq \norm{f_{-}}_{L_{s}(\mathbb{R}^d)},\quad\forall s\in[1,\infty),
\]
and again passing to the limit with $s$ we obtain the estimate for the supremum norm.
Hence, we have shown the results 2.\ and 3. To finish the proof of 1., we note that for every $f \in L_{1}(\mathbb{R}^{d}) \cap L_{\infty}(\mathbb{R}^{d})$ there exists exactly one $v \in H^{1}(\mathbb{R}^{d})$ such that $B(v) = f$ and $u = \vf^{-1}(v) \in L_{1}(\mathbb{R}^{d}) \cap L_{\infty}(\mathbb{R}^{d})$. In particular, $u \in L_{2}(\mathbb{R}^{d})$ and since
\[
u -\tau \Delta \vf(u) - \tau (u_{+})G(p(u)) = f,
\]
we have $\Delta \vf(u) \in L_{2}(\mathbb{R}^{d})$. Finally, $|\vf(u)| \leq \norm{\vf'}_{L_{\infty}(\mathbb{R}^{d})}|u|$ implies $\vf(u) \in  L_{1}(\mathbb{R}^{d}) \cap L_{\infty}(\mathbb{R}^{d})$ and $\nabla u = (\vf^{-1}(v))' \nabla v$ leads to $u \in H^{1}(\mathbb{R}^{d})$.
\end{proof}

The next result shows that $A$ is an accretive operator and establishes a first stability estimate in the regularised setting. We denote by $q$ a smooth, nonnegative, and even function on $\mathbb{R}$ with support contained in $[-1,1]$, such that $q(0) = 1$ and $\int_{\mathbb{R}}qdx = 1$. Then we set
\[
q^{\ve}(x) = \frac{1}{\ve^{d}}\prod_{i=1}^{d}q(\frac{x_i}{\ve}), \m{ where } x = (x_1,\dots,x_d).
\]

\begin{prop}\label{acr}
Let $f_1,f_2 \in L_{1}(\mathbb{R}^d)\cap L_{\infty}(\mathbb{R}^d)$ and $\tau \in (0,\frac{1}{G(0)})$. Under the assumptions of Proposition~\ref{image}, there holds
    \begin{enumerate}
    \item  \eqq{
    \norm{(I+\tau A)^{-1}f_{1} - (I+\tau A)^{-1}f_{2}}_{ L_{1}(\mathbb{R}^d)} \leq \frac{1}{1-\tau G(0)}\norm{f_{1}-f_2}_{L_{1}(\mathbb{R}^d)},
    }{acretive}
    \item \eqq{\norm{(I+\tau A)^{-1}f}_{TV(\mathbb{R}^{d})} \leq \frac{1}{1-\tau G(0)}\norm{f}_{TV(\mathbb{R}^{d})} \m{ for any } f\in BV(\mathbb{R}^{d})\cap L_{1}(\mathbb{R}^d)\cap L_{\infty}(\mathbb{R}^d),}{tv}

\item for $i=1,2$ let $G_i,p_i$ satisfy (\ref{A2}), (\ref{A3}), respectively and $\vf_i$ satisfy the assumptions of Proposition \ref{image}, and let $A_i =A_{\vf_i,G_i,p_i}$. Define $G(t)=\max\{G_{1}(t),G_{2}(t)\}$ and assume additionally that $f_1$ belongs to $BV(\mathbb{R}^d)$. Then, for
\eqq{
u_1 := (I+\tau A_1)^{-1}f_1, \hd u_2 := (I+\tau A_2)^{-1}f_2,
}{aa0}
 we have
\begin{equation}\label{estepsilon}
\begin{aligned}
&\int_{\mathbb{R}^{d}}\int_{\mathbb{R}^{d}}\abs{u_1(x)-u_2(y)}q^{\ve}(x-y)dydx\\[0.5em]
&\;\leq \frac{1}{(1-\tau G(0))^{2}}
\frac{2 d \tau}{\ve} \sup_{s\in I(f_{1})}\left[\sqrt{\vf'_{1}(s)} - \sqrt{\vf'_{2}(s)}\right]^2\norm{f_1}_{TV(\mathbb{R}^{d})}\\[0.5em]
&\;\;+ \frac{1}{1-\tau G(0)}\int_{\mathbb{R}^{d}}\int_{\mathbb{R}^{d}}\abs{f_1(x)-f_2(y)}q^{\ve}(x-y)dydx \\[0.5em]
&\;\;+\frac{\tau}{(1-\tau G(0))^{2}} \left(\norm{G'_2}_{L_{\infty}(I_{p_1,p_2}(f_1))}\sup_{s \in I(f_1)}\abs{p_{2}(s)-p_{1}(s)}+\norm{G_2-G_1}_{L_{\infty}(I_{p_1}(f_1))}\right)\norm{f_1}_{L_{1}(\mathbb{R}^{d})},
\end{aligned}
\end{equation}
 \end{enumerate}
where we denote
    \eqq{
    I(f_1) := \left[-\norm{(f_1)_{-}}_{L_{\infty}(\mathbb{R}^{d})},\frac{1}{1-\tau G(0)}\norm{(f_{1})_+}_{L_{\infty}(\mathbb{R}^{d})}\right],
    }{If1}
    \[
    I_{p_1,p_2}(f_1) := \left[\min_{i\in \{1,2\}} p_i\left(-\norm{(f_{1})_{-}}_{L_{\infty}(\mathbb{R}^d)}
\right),\max_{i\in \{1,2\}}p_i\left(\frac{\norm{(f_{1})_+}_{L_{\infty}(\mathbb{R}^d)}}{1-\tau G(0)}\right) \right],
    \]
    \[
    I_{p_i}(f_1) := \left[  p_i\left(-\norm{(f_{1})_{-}}_{L_{\infty}(\mathbb{R}^d)}\right),  p_i\left(\frac{\norm{(f_{1})_+}_{L_{\infty}(\mathbb{R}^d)}}{1-\tau G(0)}\right)\right], \hd i = 1,2,
    \]
\[
\norm{f}_{TV(\mathbb{R}^{d})} = \sup_{\Phi \in C_{0}^{1}(\mathbb{R}^{d};\mathbb{R}^{d}), \norm{\Phi}_{L_{\infty}(\mathbb{R}^{d})} = 1} \int_{\mathbb{R}^{d}}f \divv  \Phi dx
\]
and by $BV(\mathbb{R}^{d})$ we denote the set of all locally integrable  functions with finite $TV$ - norm.
\end{prop}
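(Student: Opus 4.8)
The plan is to handle the three claims in increasing order of difficulty, the first two being standard consequences of the structure of $A$ while the localised estimate \eqref{estepsilon} is the genuine point, proved by Kruzhkov's doubling of variables.

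For part 1 I would subtract the two resolvent identities $u_i - \tau\Delta\vf(u_i) - \tau(u_i)_+G(p(u_i)) = f_i$ and test the difference against a smooth nondecreasing approximation $S_\delta$ of $\sign(u_1-u_2)$. Since $\vf$ is increasing, $\sign(\vf(u_1)-\vf(u_2))=\sign(u_1-u_2)$, so after integrating by parts the Laplacian contributes a term of the correct nonpositive sign -- the usual Kato inequality $\int\Delta(\vf(u_1)-\vf(u_2))\,\sign(u_1-u_2)\,dx\le0$. The one model-specific point is the reaction term: using that $p\ge0$ is nondecreasing and $G$ nonincreasing, one checks that $s\mapsto(s)_+G(p(s))$ satisfies $[(a)_+G(p(a))-(b)_+G(p(b))]\,\sign(a-b)\le G(0)\abs{a-b}$, which yields, in the limit $\delta\to0$, $\norm{u_1-u_2}_{L_1}\le\norm{f_1-f_2}_{L_1}+\tau G(0)\norm{u_1-u_2}_{L_1}$, hence \eqref{acretive}. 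Part 2 then follows from part 1 by translation invariance: since $\vf,G,p$ do not depend on $x$, the resolvent commutes with any translation $\tau_h$, so $\norm{u-\tau_h u}_{L_1}\le(1-\tau G(0))^{-1}\norm{f-\tau_h f}_{L_1}$; dividing by $\abs h$ and taking the supremum over $h$ gives \eqref{tv}.

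For part 3 I would run the doubling of variables in $x$ (for $u_1$) and $y$ (for $u_2$): test the resolvent equation at $x$ against $S_\delta(u_1(x)-u_2(y))q^{\ve}(x-y)$, the one at $y$ against its negative, and add. As $\delta\to0$ the zeroth-order terms produce $\iint\abs{u_1(x)-u_2(y)}q^{\ve}$ on the left together with a source contribution bounded by $\iint\abs{f_1(x)-f_2(y)}q^{\ve}$. Writing $r_i(s)=(s)_+G_i(p_i(s))$, the reaction terms split as $r_1(u_1(x))-r_2(u_2(y))=[r_2(u_1(x))-r_2(u_2(y))]+[r_1(u_1(x))-r_2(u_1(x))]$: the first bracket is controlled as in part 1 by $G(0)\abs{u_1(x)-u_2(y)}$, producing a $\tau G(0)$ multiple of the left-hand side to be absorbed; the second, evaluated at the single point $u_1(x)$, integrates -- using $\int q^{\ve}\,dy=1$, the splitting $G_1(p_1)-G_2(p_2)=[G_1(p_1)-G_2(p_1)]+[G_2(p_1)-G_2(p_2)]$, and the a priori bound $\norm{(u_1)_+}_{L_1}\le(1-\tau G(0))^{-1}\norm{(f_1)_+}_{L_1}$ from Proposition~\ref{image} -- to the last line of \eqref{estepsilon}.

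The heart of the matter, and the main obstacle, is the diffusion term $\tau\iint[\Delta_x\vf_1(u_1)-\Delta_y\vf_2(u_2)]S_\delta(u_1(x)-u_2(y))q^{\ve}$. Integrating by parts in $x$ and in $y$, and exploiting $\nabla_x q^{\ve}(x-y)=-\nabla_y q^{\ve}(x-y)$ to move the derivatives that fall on $q^{\ve}$ onto the sign factor, I would turn the whole expression into integrals carrying the nonnegative weight $S_\delta'(u_1-u_2)q^{\ve}$. Collecting terms gives, under the integral, the quadratic form $-\vf_1'(u_1)\abs{\nabla_x u_1}^2-\vf_2'(u_2)\abs{\nabla_y u_2}^2+(\vf_1'(u_1)+\vf_2'(u_2))\nabla_x u_1\cdot\nabla_y u_2$, which I rewrite as $-\abs{\sqrt{\vf_1'(u_1)}\,\nabla_x u_1-\sqrt{\vf_2'(u_2)}\,\nabla_y u_2}^2+(\sqrt{\vf_1'(u_1)}-\sqrt{\vf_2'(u_2)})^2\nabla_x u_1\cdot\nabla_y u_2$. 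Discarding the negative square is precisely where the mismatch enters through $[\sqrt{\vf_1'}-\sqrt{\vf_2'}]^2$ rather than $\abs{\vf_1'-\vf_2'}$. In the surviving term the factor $S_\delta'(u_1-u_2)$ localises $u_2(y)\approx u_1(x)$, so as $\delta\to0$ the coefficient is controlled by $\sup_{s\in I(f_1)}[\sqrt{\vf_1'(s)}-\sqrt{\vf_2'(s)}]^2$ (a function of $x$ alone on the diagonal); a last integration by parts in $y$ trades the uncontrolled $\nabla_y u_2$ for $\nabla_x q^{\ve}$, and since $\abs{S_\delta}\le1$ and $\int_{\R^d}\abs{\nabla q^{\ve}}\le\tfrac{2d}{\ve}$ this contributes the factor $\tfrac{2d}{\ve}$, while $\nabla_x u_1$ integrates to $\norm{u_1}_{TV}\le(1-\tau G(0))^{-1}\norm{f_1}_{TV}$ by part 2. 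Assembling the three bounds, absorbing the $\tau G(0)$ term to the left, and dividing by $(1-\tau G(0))$ reproduces exactly the three contributions of \eqref{estepsilon}. The points I would treat most carefully are the justification of the integrations by parts (granted by $u_i\in H^1$ and $\vf_i(u_i)\in H^2$ from Proposition~\ref{image}) and the two independent limits -- $\delta\to0$, which freezes the coefficient mismatch onto $\{u_1(x)=u_2(y)\}$, and the bookkeeping of the $(1-\tau G(0))^{-1}$ factors, so that the diffusion and reaction discrepancies pick up the squared prefactor whereas the source keeps a single power.
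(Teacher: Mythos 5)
Your proposal is correct and follows essentially the same route as the paper: Kruzhkov doubling of variables, the same algebraic mechanism producing the mismatch $\bigl[\sqrt{\vf_1'}-\sqrt{\vf_2'}\,\bigr]^2$ from the cross terms, the same splitting of the reaction discrepancy, and the same trade of $\nabla_y u_2$ for $\nabla_x q^{\ve}$ using $\int\abs{\nabla q^{\ve}}\le 2d/\ve$ together with the $TV$ and $L_1$ resolvent bounds. The only organisational differences are that you prove accretivity directly (the paper obtains it as the case $\vf_1=\vf_2$, $G_1=G_2$, $p_1=p_2$ of the doubled estimate with $\ve\to0$) and that you complete the square explicitly where the paper uses AM--GM together with the primitives $F_i^{\eta}$, $F_{1,2}^{\eta}$ --- note that your ``last integration by parts in $y$'' implicitly requires exactly such a primitive in the $u_2$-variable of $(\sqrt{\vf_1'(s)}-\sqrt{\vf_2'(\sigma)})^2S_\eta'(s-\sigma)$, which is where those auxiliary functions enter.
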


\begin{proof}

The proof is an adaptation of the ideas in \cite{Grip} and follows the classical doubling of variables technique.
We define
$S_{\eta}(t) = S(t/\eta), \eta > 0$, where $S$ is an odd, nondecreasing, smooth approximation of the signum function.
Let us fix $\tau \in
(0,\frac{1}{G(0)})$ and take $f_i \in L_{1}(\mathbb{R}^d)\cap L_{\infty}(\mathbb{R}^d)$  for $i=1,2$. Then, due to Proposition~\ref{image}, there exist $u_1, u_2 \in D(A_1) = D(A_2)$, i.e. $u_1,u_2 \in  L_{1}(\mathbb{R}^{d})\cap L_{\infty}(\mathbb{R}^{d})\cap H^{1}(\mathbb{R}^{d})$ satisfying for $i=1,2 \hd \hd \Delta \vf(u_i) \in L_{2}(\mathbb{R}^{d})$ and
\eqq{
(I+\tau A_1)u_1 = f_1, \hd (I+\tau A_2)u_2 = f_2.
}{aa}
We apply the method of doubling the variables. To this end, we will consider $u_1,f_1$ as functions of $x \in \mathbb{R}^{d}$ and $u_2,f_2$ as functions of $y \in \mathbb{R}^{d}$. Moreover, we introduce
\begin{equation}\label{eq:Fetai}
F_{i}^{\eta}(s,t) = \int_{t}^{s}\vf'_{i}(\sigma)S_{\eta}(\sigma-t)d\sigma, \hd F_{1,2}^{\eta}(s,t) = \int_{s}^{t}\sqrt{\vf_{2}(r)}\int_{r}^{s}\sqrt{\vf_{1}(\sigma)}S'_{\eta}(\sigma-r)d\sigma dr.
\end{equation}
Then, clearly
\[
\Delta_{x}(\vf_{1}(u_1))S_{\eta}(u_1-u_2) = \Delta_{x}F_{1}^{\eta}(u_1,u_2) - \vf'_{1}(u_1)S'_{\eta}(u_1-u_2)\abs{\nabla_x u_1}^{2},
\]
\[
\Delta_{y}(\vf_{2}(u_2))S_{\eta}(u_2-u_1) = \Delta_{y}F_{2}^{\eta}(u_2,u_1) - \vf'_{2}(u_2)S'_{\eta}(u_2-u_1)\abs{\nabla_y u_2}^{2},
\]
\[
\nabla_x \nabla_y F_{1,2}^{\eta}(u_1,u_2) = \sqrt{\vf'_{1}(u_1)}\sqrt{\vf'_{2}(u_2)}S'_{\eta}(u_1-u_2)\nabla_{x}u_1 \cdot \nabla_{y} u_2.
\]
We now multiply the first equation in (\ref{aa}) by $S_{\eta}(u_1-u_2)$, the second one by $S_{\eta}(u_2-u_1)$ and add them together to the result
\[
(u_1-u_2)S_{\eta}(u_1-u_2) - \tau\Delta_x(\vf_{1}(u_1))S_{\eta}(u_1-u_2) -\tau\Delta_y(\vf_{2}(u_2))S_{\eta}(u_2-u_1)
\]
\eqq{
\;\;+ \tau[(u_2)_{+}G_{2}(p_2(u_2)) - (u_1)_{+}G_{1}(p_1(u_1))] S_{\eta}(u_1-u_2) = (f_1-f_2)S_{\eta}(u_1-u_2).
}{ab}
Since $S'_{\eta}$ is nonnegative and even, we have
\begin{align*}
    -&\Delta_x(\vf_{1}(u_1))S_{\eta}(u_1-u_2) -\Delta_y(\vf_{2}(u_2))S_{\eta}(u_2-u_1)\\[0.5em]
    &= -\Delta_{x}F_{1}^{\eta}(u_1,u_2) - \Delta_{y}F_{2}^{\eta}(u_2,u_1) + \left[\vf'_{1}(u_1)\abs{\nabla_x u_1}^{2}+ \vf'_{2}(u_2)\abs{\nabla_y u_2}^{2}\right]  S'_{\eta}(u_1-u_2)\\[0.5em]
    &\geq  -\Delta_{x}F_{1}^{\eta}(u_1,u_2) - \Delta_{y}F_{2}^{\eta}(u_2,u_1) + 2\sqrt{\vf'_{1}(u_1)}\sqrt{\vf'_{2}(u_2)}S'_{\eta}(u_1-u_2)\nabla_{x}u_1 \cdot \nabla_{y} u_2\\[0.5em]
    &=-\Delta_{x}F_{1}^{\eta}(u_1,u_2) - \Delta_{y}F_{2}^{\eta}(u_2,u_1) + 2\nabla_x \nabla_y F_{1,2}^{\eta}(u_1,u_2).
\end{align*}
Inserting this into (\ref{ab}) we arrive at
\[
(u_1-u_2)S_{\eta}(u_1-u_2)-\tau\Delta_{x}F_{1}^{\eta}(u_1,u_2) - \tau\Delta_{y}F_{2}^{\eta}(u_2,u_1) + 2\tau \nabla_x \nabla_y F_{1,2}^{\eta}(u_1,u_2)
\]
\eqq{
\;\;+ \tau[(u_2)_{+}G_{2}(p_2(u_2)) - (u_1)_{+}G_{1}(p_1(u_1))] S_{\eta}(u_1-u_2) \leq (f_1-f_2)S_{\eta}(u_1-u_2).
}{ac}
Let $\Psi$ be a smooth function with compact support in $\mathbb{R}^{d}$ such that $\Psi\equiv 1$ for $\abs{x} \leq 1$ and $0\leq \Psi \leq 1$. We multiply both sides of (\ref{ac}) by the nonnegative function $\Phi^{\ve}_m(x,y) = q^{\ve}(x-y)\Psi(x/m)\Psi(y/m)$, $m \in \mathbb{N}$ and integrate in $x$ and $y$, obtaining
\begin{align*}
    \int_{\mathbb{R}^{d}}\int_{\mathbb{R}^{d}}&\left[(u_1-u_2)S_{\eta}(u_1-u_2)-\tau\Delta_{x}F_{1}^{\eta}(u_1,u_2) - \tau\Delta_{y}F_{2}^{\eta}(u_2,u_1) + 2\tau \nabla_x \nabla_y F_{1,2}^{\eta}(u_1,u_2) \right]\Phi^{\ve}_m(x,y)dydx\\
    &+\int_{\mathbb{R}^{d}}\int_{\mathbb{R}^{d}} \tau[(u_2)_{+}G_{2}(p_2(u_2)) - (u_1)_{+}G_{1}(p_1(u_1))] S_{\eta}(u_1-u_2)\Phi^{\ve}_m(x,y)dydx\\ &\leq \int_{\mathbb{R}^{d}}\int_{\mathbb{R}^{d}}(f_1-f_2)S_{\eta}(u_1-u_2)\Phi^{\ve}_m(x,y)dydx.
\end{align*}
Since $\Delta_x q^{\ve}(x-y) = \Delta_y q^{\ve}(x-y)$ and $\nabla_x q^{\ve}(x-y) = - \nabla_{y}q^{\ve}(x-y)$, integration by parts gives
\begin{align*}
    -&\int_{\mathbb{R}^{d}}\int_{\mathbb{R}^{d}}\left[\Delta_{x}F_{1}^{\eta}(u_1,u_2) - \Delta_{y}F_{2}^{\eta}(u_2,u_1) + 2\nabla_x \nabla_y F_{1,2}^{\eta}(u_1,u_2) \right] \Phi^{\ve}_m(x,y)dydx\\
    &=-\int_{\mathbb{R}^{d}}\int_{\mathbb{R}^{d}}\left[F_{1}^{\eta}(u_1,u_2) + F_{2}^{\eta}(u_2,u_1)  + 2  F^{\eta}_{1,2}(u_1,u_2) \right]\Delta_{x}q^{\ve}(x-y)\Psi(x/m)\Psi(y/m)dydx\\ &\quad-\int_{\mathbb{R}^{d}}\int_{\mathbb{R}^{d}} E_{m}(x,y)dydx,
\end{align*}
where
\[
 E_{m}(x,y) = F^{\eta}_1(u_1,u_2)\left[q^{\ve}(x-y)\Delta_x \Psi(x/m)\Psi(y/m)m^{-2} + 2\nabla_x q^{\ve}(x-y) \cdot \nabla\Psi(x/m)\Psi(y/m)m^{-1}\right]
\]
\[
+F^{\eta}_2(u_2,u_1)\left[q^{\ve}(x-y)\Delta_y \Psi(y/m)\Psi(x/m)m^{-2} - 2\nabla_x q^{\ve}(x-y) \cdot \nabla\Psi(y/m)\Psi(x/m)m^{-1}\right]
\]
\[
-2 F^{\eta}_{1,2}(u_1,u_2)\left[q^{\ve}(x-y)\nabla_x \Psi(x/m)\nabla_y\Psi(y/m)m^{-2} + \nabla_x q^{\ve}(x-y) \cdot \left(\Psi(x/m)\nabla\Psi(y/m)-\nabla \Psi(x/m) \Psi(y/m)\right)m^{-1}\right].
\]
We note that by \eqref{eq:Fetai}
\[
|F^{\eta}_1(u_1,u_2)| + |F^{\eta}_2(u_2,u_1)| + |F^{\eta}_{1,2}(u_1,u_2)| \leq C(|u_1| + |u_2|),
\]
for some nonnegative $C=C(\norm{\vf_{1}}_{W^{1,\infty}(I(f_1)\cup I(f_2))}, \norm{\vf_2}_{W^{1,\infty}(I(f_1)\cup I(f_2))})$,
and
\[
\abs{(u_2)_{+}G_{2}(p_2(u_2)) - (u_1)_{+}G_{1}(p_1(u_1))} \leq (\norm{G_1}_{L_{\infty}(\mathbb{R})} +\norm{G_2}_{L_{\infty}(\mathbb{R})}) (|u_2| + |u_1|).
\]
Since $|u_1(x)| + |u_2(y)|$ is integrable on $\{(x,y): \norm{x-y}_{\infty} \leq \ve \}$, by the Lebesgue dominated convergence theorem we may pass to the limit with $m$ and we arrive at
\[
\int_{\mathbb{R}^{d}}\int_{\mathbb{R}^{d}}(u_1-u_2)S_{\eta}(u_1-u_2)q^{\ve}(x-y)dydx-\tau\int_{\mathbb{R}^{d}}\int_{\mathbb{R}^{d}}\left[F_{1}^{\eta}(u_1,u_2) + F_{2}^{\eta}(u_2,u_1)  + 2  F^{\eta}_{1,2}(u_1,u_2) \right]\Delta_{x}q^{\ve}(x-y)dydx
\]
\[
+\tau\int_{\mathbb{R}^{d}}\int_{\mathbb{R}^{d}} [(u_2)_{+}G_{2}(p_2(u_2)) - (u_1)_{+}G_{1}(p_1(u_1))] S_{\eta}(u_1-u_2)q^{\ve}(x-y)dydx 
\]
\[
\leq \int_{\mathbb{R}^{d}}\int_{\mathbb{R}^{d}}(f_1-f_2)S_{\eta}(u_1-u_2)q^{\ve}(x-y)dydx.
\]
Integrating by parts again
\begin{equation}\label{ad}
    \begin{aligned}
        &\int_{\mathbb{R}^{d}}\int_{\mathbb{R}^{d}}(u_1-u_2)S_{\eta}(u_1-u_2)q^{\ve}(x-y)dydx\\[0.5em]
        &\quad+\tau\int_{\mathbb{R}^{d}}\int_{\mathbb{R}^{d}}\left[\partial_{s}F_{1}^{\eta}(s,t) + \partial_{s}F_{2}^{\eta}(t,s)  + 2 \partial_{s} F^{\eta}_{1,2}(s,t) \right]|_{s=u_{1}(x), t = u_{2}(y)} \nabla_x u_1 \cdot \nabla_{x}q^{\ve}(x-y)dydx\\[0.5em]
        &\quad +\tau\int_{\mathbb{R}^{d}}\int_{\mathbb{R}^{d}} [(u_2)_{+}G_{2}(p_2(u_2)) - (u_1)_{+}G_{1}(p_1(u_1))] S_{\eta}(u_1-u_2)q^{\ve}(x-y)dydx\\[0.5em]
        &\leq \int_{\mathbb{R}^{d}}\int_{\mathbb{R}^{d}}(f_1-f_2)S_{\eta}(u_1-u_2)q^{\ve}(x-y)dydx.
    \end{aligned}
\end{equation}
We observe that
\begin{align*}
    \partial_{s}F_{1}^{\eta}(s,t) &+ \partial_{s}F_{2}^{\eta}(t,s)  + 2 \partial_{s} F^{\eta}_{1,2}(s,t)\\
    &=\vf'_{1}(s)S_{\eta}(s-t) - \int_{s}^{t}\vf'_{2}(\sigma)S'_{\eta}(\sigma-s)d\sigma + 2\int_{s}^{t}\sqrt{\vf_{2}(\sigma)}{\sqrt{\vf_{1}(s)}}S'_{\eta}(s-\sigma)d\sigma\\
    &= \int_{s}^{t}S'_{\eta}(s-\sigma)\left[\sqrt{\vf'_{1}(s)} - \sqrt{\vf'_{2}(\sigma)}\right]^2 d\sigma.
\end{align*}
We may choose $S_\eta$ such that $|S'_\eta| \leq 2/\eta$ and $\supp (S'_\eta(t)) \subset (-\eta,\eta)$.
Hence, by the Lebesgue differentiation theorem for every $t$ and $s$ we have
\eqq{
\partial_{s}F_{1}^{\eta}(s,t) + \partial_{s}F_{2}^{\eta}(t,s)  + 2 \partial_{s} F^{\eta}_{1,2}(s,t) \rightarrow \mathrm{sign}(s-t)\left[\sqrt{\vf'_{1}(s)} - \sqrt{\vf'_{2}(s)}\right]^2.
}{ae}
Moreover, the expression above may be bounded independently on $\eta$ 
\[
\abs{\int_{s}^{t}S'_{\eta}(s-\sigma)\left[\sqrt{\vf'_{1}(s)} - \sqrt{\vf'_{2}(\sigma)}\right]^2 d\sigma} \leq \sup_{s}\left[\sqrt{\vf'_{1}(s)} - \sqrt{\vf'_{2}(s)}\right]^2\frac{2}{\eta}\min\{\eta,|t-s|\} 
\]
\[
\leq 2\sup_{s}\left[\sqrt{\vf'_{1}(s)} - \sqrt{\vf'_{2}(s)}\right]^2.
\]
Furthermore,
\[
\int_{\mathbb{R}^{d}}\int_{\mathbb{R}^{d}} [(u_2)_{+}G_{2}(p_2(u_2)) - (u_1)_{+}G_{1}(p_1(u_1))] S_{\eta}(u_1-u_2)q^{\ve}(x-y)dydx
\]
\[
= \int_{\mathbb{R}^{d}}\int_{\mathbb{R}^{d}} [(u_2)_{+}G_{2}(p_2(u_2)) - (u_1)_{+}G_{2}(p_1(u_1))] S_{\eta}(u_1-u_2)q^{\ve}(x-y)dydx
\]
\eqq{
+ \int_{\mathbb{R}^{d}}\int_{\mathbb{R}^{d}} [G_{2}(p_1(u_1)) - G_{1}(p_1(u_1))](u_1)_{+} S_{\eta}(u_1-u_2)q^{\ve}(x-y)dydx.
}{af}
Let us denote for $f \in L_{\infty}(\mathbb{R}^d)$
\[
J_{p_1,p_2}(f) := \left[\min_{i\in \{1,2\}}\inf_{x\in \mathbb{R}^d}p_i(f(x)),\max_{i\in \{1,2\}}\sup_{x\in \mathbb{R}^d}p_i(f(x)) \right], \hd  J_{p_i}(f) := \left[ \inf_{x \in \mathbb{R}^d} p_i(f(x)), \sup_{x \in \mathbb{R}^d} p_i(f(x))\right], \hd i = 1,2.
\]
Then, the second term of~\eqref{af} may be estimated as follows
\[
\abs{\int_{\mathbb{R}^{d}}\int_{\mathbb{R}^{d}} [G_{2}(p_1(u_1)) - G_{1}(p_1(u_1))](u_1)_{+} S_{\eta}(u_1-u_2)q^{\ve}(x-y)dydx}
\]
\eqq{
\leq \norm{G_{2}-G_{1}}_{L_{\infty}(J_{p_1}(u_1))}\int_{\mathbb{R}^{d}}\int_{\mathbb{R}^{d}}(u_1)_{+} q^{\ve}(x-y)dydx,
}{ag}
while the first one may be decomposed into three parts
\begin{equation}\label{ah}
    \begin{aligned}
        \int_{\mathbb{R}^{d}}\int_{\mathbb{R}^{d}} &[(u_2)_{+}G_{2}(p_2(u_2)) - (u_1)_{+}G_{2}(p_1(u_1))] S_{\eta}(u_1-u_2)q^{\ve}(x-y)dydx \\
        =& \int_{\mathbb{R}^{d}}\int_{\mathbb{R}^{d}} (u_2)_{+}[G_{2}(p_2(u_2)) - G_{2}(p_2(u_1))] S_{\eta}(u_1-u_2)q^{\ve}(x-y)dydx\\
        &+ \int_{\mathbb{R}^{d}}\int_{\mathbb{R}^{d}} G_{2}(p_2(u_1))((u_2)_{+}-(u_1)_{+}) S_{\eta}(u_1-u_2)q^{\ve}(x-y)dydx \\
        &+\int_{\mathbb{R}^{d}}\int_{\mathbb{R}^{d}} (u_1)_{+}[G_{2}(p_2(u_1)) - G_{2}(p_1(u_1))] S_{\eta}(u_1-u_2)q^{\ve}(x-y)dydx \equiv I_{1}+I_2+I_3.
    \end{aligned}
\end{equation}
Since $p_2$ is nondecreasing and nonnegative and $G_2$ is nonincreasing, we infer that $I_1 \geq 0$.
Moreover,
\eqq{
\abs{I_2} \leq G(0)\int_{\mathbb{R}^{d}}\int_{\mathbb{R}^{d}}\abs{u_1-u_2}q^{\ve}(x-y)dydx,
}{ai}
\eqq{
\abs{I_3} \leq \norm{G'_2}_{L_{\infty}(J_{p_1,p_2}(u_1))} \sup_{s \in I(f_1)}\abs{p_{2}(s)-p_{1}(s)}\int_{\mathbb{R}^{d}}\int_{\mathbb{R}^{d}}(u_1)_{+}q^{\ve}(x-y)dydx,
}{aj}
where $I(f_1)$ is defined in (\ref{If1}).
Using (\ref{ae})--(\ref{aj}) in (\ref{ad}) and passing to the limit $\eta\to0$ in (\ref{ad}) leads to
\[
\int_{\mathbb{R}^{d}}\int_{\mathbb{R}^{d}}\abs{u_1-u_2}q^{\ve}(x-y)dydx
+\tau\int_{\mathbb{R}^{d}}\int_{\mathbb{R}^{d}}\left[\sqrt{\vf'_{1}(u_1)} - \sqrt{\vf'_{2}(u_1)}\right]^2 \mathrm{sign}(u_2-u_1) \nabla_x u_1 \cdot \nabla_{x}q^{\ve}(x-y)dydx
\]
\[
\leq \int_{\mathbb{R}^{d}}\int_{\mathbb{R}^{d}}\abs{f_1-f_2}q^{\ve}(x-y)dydx + \tau G(0)\int_{\mathbb{R}^{d}}\int_{\mathbb{R}^{d}}\abs{u_1-u_2}q^{\ve}(x-y)dydx
\]
\eqq{
+\tau \left(\norm{G'_2}_{L_{\infty}(J_{p_1,p_2}(u_1))}\sup_{s \in I(f_1)}\abs{p_{2}(s)-p_{1}(s)}+\norm{G_2-G_1}_{L_{\infty}(J_{p_1}(u_1))}\right)\int_{\mathbb{R}^{d}}\int_{\mathbb{R}^{d}}|u_1|q^{\ve}(x-y)dydx .
}{ak}
Hence, in the case $\vf_1=\vf_2$, $G_1=G_2$, $p_1=p_2$, passing to the limit with $\ve\to0$ gives
\[
\norm{u_{1}-u_2}_{L_{1}(\mathbb{R}^{d})} \leq \frac{1}{1-\tau G(0)} \norm{f_{1}-f_2}_{L_{1}(\mathbb{R}^{d})},
\]
and thus (\ref{acretive}) is proven. Estimate (\ref{tv}) follows from  (\ref{acretive}). Indeed, denoting the translation operator by $T_y$, i.e., $(T_yf)(x) = f(x+y)$, we have
\[
\norm{T_y(I+\tau A)^{-1}f - (I+\tau A)^{-1}f}_{L_{1}(\mathbb{R}^{d})} = \norm{(I+\tau A)^{-1}T_yf - (I+\tau A)^{-1}f}_{L_{1}(\mathbb{R}^{d})} 
\]
\[
\leq \frac{1}{1-\tau G(0)}\norm{T_yf-f}_{L_{1}(\mathbb{R}^{d})},
\]
and (\ref{tv}) follows from the characterisation of the $TV$-norm.
Furthermore, from the properties of $q^{\ve}$ we have
\[
\int_{\mathbb{R}^{d}}q^{\ve}(x-y)dy = 1, \hd \text{and} \hd \int_{\mathbb{R}^{d}}\abs{ q^{\ve}_{x_i}(x-y)}dy = \frac{2}{\varepsilon}, \hd i = 1,\dots,d.
\]
Since (\ref{tv}) implies that if $ f_1 \in BV(\mathbb{R}^{d})$, then also $u_1 \in  BV(\mathbb{R}^{d})$, the inequality (\ref{ak}) for  $ f_1 \in BV(\mathbb{R}^{d})$ leads to
\[
(1-\tau G(0))\int_{\mathbb{R}^{d}}\int_{\mathbb{R}^{d}}\abs{u_1-u_2}q^{\ve}(x-y)dydx 
\]
\[
\leq
\frac{2 d \tau}{\ve} \sup_{s\in I(f_{1})}\left[\sqrt{\vf'_{1}(s)} - \sqrt{\vf'_{2}(s)}\right]^2\norm{u_1}_{TV(\mathbb{R}^{d})}
+ \int_{\mathbb{R}^{d}}\int_{\mathbb{R}^{d}}\abs{f_1-f_2}q^{\ve}(x-y)dydx 
\]
\[
+ \tau \left(\norm{G'_2}_{L_{\infty}(J_{p_1,p_2}(u_1))}\sup_{s \in I(f_1)}\abs{p_{2}(s)-p_{1}(s)}+\norm{G_2-G_1}_{L_{\infty}(J_{p_1}(u_1))}\right)\norm{u_1}_{L_{1}(\mathbb{R}^{d})}.
\]
Now, we extend the intervals $J_{p_1}, J_{p_1,p_2}$ to $I_{p_1}, I_{p_1,p_2}$  applying Proposition \ref{image}, and the fact that $p_i$ are nondecreasing. Finally, due to estimate (\ref{tv}), we arrive at (\ref{estepsilon}).
\end{proof}

\section{Existence and approximability of solutions to the resolvent equation}\label{sec:Resolvent}
In this section we show that the solution to resolvent equation $u = (I+\tau A)^{-1}f$, where $f \in L_{1}(\mathbb{R}^d)$ and $Au = -\Delta \vf(u)-(u)_{+}G(p(u))$ with $\vf,G,p$ satisfying merely (\ref{A1}) - (\ref{A3}) may be nicely approximated by solutions to the regularized problem $u_n = (1+\tau A_n)^{-1}f_n$ and the results established in Proposition~\ref{acr} for a regularized setting may be transferred to our initial setting. Let us begin with the following $L_1$-solvability result, which fits within the scope of the classical paper \cite{BBC}.
\begin{prop}\label{exi}
Let $\vf$, $G$, $p$ satisfy (\ref{A1})-(\ref{A3}) and let $
\tau \in (0,\frac{1}{G(0)})$. For every $f \in L_{1}(\mathbb{R}^{d})$ there exists exactly one $u \in L_{1}(\mathbb{R}^{d})$ which satisfies
\eqq{
u -\tau \Delta \vf(u) - \tau (u)_{+}G(p(u)) = f,
}{Pw}
in the sense of distributions. Furthermore,
\[
\norm{\vf(u)}_{L_{1}(\mathbb{R}^d)}  \leq C\norm{f}_{L_{1}(\mathbb{R}^d)}, \hd
\norm{u}_{L_{1}(\mathbb{R}^d)} \leq \frac{1}{1-\tau G(0)}\norm{f}_{L_{1}(\mathbb{R}^d)}.
\]
\end{prop}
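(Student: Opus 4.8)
The plan is to realise $u$ as an $L_1$-limit of solutions of the regularised resolvent problems of Proposition~\ref{image}, using the accretivity and stability estimates of Proposition~\ref{acr} to control the limit, and to invoke the $L_1$ elliptic theory of~\cite{BBC} for the global statements. Note first that $G$ and $p$ already satisfy the hypotheses of Proposition~\ref{image}, so only $\vf$ must be regularised. I would fix a sequence $\vf_n\in C^2(\R)$ with $\vf_n(0)=0$ and $\vf_n'>0$, obtained by mollifying $\vf$ and adding the strictly increasing perturbation $t/n$, so that $\vf_n\to\vf$ and $\vf_n'\to\vf'$ uniformly on compact sets (possible since $\vf\in C^1$ by~\eqref{A1}, whence $\vf'\geq0$ is continuous). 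Writing $A_n := A_{\vf_n,G,p}$, Proposition~\ref{image} yields, for every $f\in BV(\R^d)\cap L_1(\R^d)\cap L_\infty(\R^d)$, a solution $u_n := (I+\tau A_n)^{-1}f$; these are bounded uniformly in $L_1$, $L_\infty$ and $TV$ by parts~2--3 of Proposition~\ref{image} and by~\eqref{tv}.

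The first, and principal, task is to show that $\{u_n\}$ is Cauchy in $L_1(\R^d)$. I would apply~\eqref{estepsilon} with $\vf_1=\vf_n$, $\vf_2=\vf_m$ and identical $f_1=f_2=f$, $G_1=G_2=G$, $p_1=p_2=p$. The $G$- and $p$-terms then vanish, the data term is controlled by $C\ve\norm{f}_{TV}$ via the translation estimate $\norm{f-T_zf}_{L_1}\le\abs{z}\norm{f}_{TV}$, and there remains
\begin{equation*}
\int_{\R^d}\int_{\R^d}\abs{u_n(x)-u_m(y)}q^\ve(x-y)\,dy\,dx \le \tfrac{C}{\ve}\,\delta_{nm}\norm{f}_{TV} + C\ve\norm{f}_{TV},\quad \delta_{nm}:=\sup_{s\in I(f)}\Bigl[\sqrt{\vf_n'(s)}-\sqrt{\vf_m'(s)}\Bigr]^2 .
\end{equation*}
Using the translation estimate once more together with the uniform $TV$ bound on $u_m$, the left-hand side dominates $\norm{u_n-u_m}_{L_1}-C\ve\norm{f}_{TV}$. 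Since $I(f)$ is a fixed compact interval and $\sqrt{\vf_n'}\to\sqrt{\vf'}$ uniformly on it, $\delta_{nm}\to0$; optimising over $\ve$ (taking $\ve\sim\sqrt{\delta_{nm}}$) gives $\norm{u_n-u_m}_{L_1}\le C\sqrt{\delta_{nm}}\,\norm{f}_{TV}\to0$. This balancing is the main obstacle: the estimate~\eqref{estepsilon} degenerates like $1/\ve$, so one cannot send $\ve\to0$ directly, and the convergence can only be extracted because the uniform $TV$ control from~\eqref{tv} lets one compare the double integral with the genuine $L_1$ distance while playing $\delta_{nm}$ off against $\ve$. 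Once $u_n\to u$ in $L_1$, the uniform $L_\infty$ bound and $\vf_n\to\vf$ locally uniformly give $\vf_n(u_n)\to\vf(u)$ and $(u_n)_+G(p(u_n))\to (u)_+G(p(u))$ in $L_1$, so the distributional equation~\eqref{Pw} passes to the limit and $u$ solves it.

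To remove the auxiliary $BV\cap L_\infty$ restriction I would use the contraction estimate~\eqref{acretive}. Passing it to the limit in $n$ shows that the solution map $R\colon f\mapsto u$ just constructed is $\tfrac{1}{1-\tau G(0)}$-Lipschitz from $(BV\cap L_1\cap L_\infty,\norm{\cdot}_{L_1})$ into $L_1$; since this class is dense in $L_1(\R^d)$, $R$ extends uniquely to all of $L_1$, and the same limiting argument shows the extension solves~\eqref{Pw}. Applying~\eqref{acretive} with $f_2=0$ (note $A0=0$) gives uniqueness and the bound $\norm{u}_{L_1}\le\tfrac{1}{1-\tau G(0)}\norm{f}_{L_1}$, which also follows from part~2 of Proposition~\ref{image} in the limit.

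Finally, for the bound $\norm{\vf(u)}_{L_1}\le C\norm{f}_{L_1}$ and the solvability for genuinely unbounded $f\in L_1$, I would recast~\eqref{Pw} through the substitution $w=\vf(u)$, $\beta:=\vf^{-1}$, into the semilinear elliptic form $-\tau\Delta w+\beta(w)=f+\tau(\beta(w))_+G(p(\beta(w)))$, whose right-hand side lies in $L_1$. This is precisely of the type covered by the $L_1$ theory of~\cite{BBC}, from which both the remaining $L_1$ estimate on $w=\vf(u)$ and the treatment of unbounded data follow, completing the proof.
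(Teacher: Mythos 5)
Your construction of a solution for $f\in BV(\R^d)\cap L_1(\R^d)\cap L_\infty(\R^d)$ by regularising $\vf$ and balancing $\ve$ against $\delta_{nm}$ in~\eqref{estepsilon} is sound in itself -- it is essentially the content of the paper's Proposition~\ref{apro}, which appears \emph{after} this statement and uses its uniqueness assertion to identify the limit -- but the two steps you rely on to finish the proposition both contain genuine gaps. First, uniqueness: Proposition~\ref{exi} claims that \emph{every} $L_1$ distributional solution of~\eqref{Pw} coincides with the constructed one. Passing~\eqref{acretive} to the limit only shows that the solution operator $R$ you built is a contraction, i.e.\ that solutions \emph{produced by your approximation scheme} are unique; it says nothing about a hypothetical second distributional solution that does not arise as a limit of regularised resolvents. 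Second, your appeal to~\cite{BBC}: rewriting the equation as $-\tau\Delta w+\beta(w)=f+\tau(\beta(w))_{+}G(p(\beta(w)))$ with $\beta=\vf^{-1}$ puts the unknown on the right-hand side, so this is \emph{not} of the form $-\Delta w+\beta(w)=g$ with prescribed $g\in L_1$ treated in~\cite{BBC}. Moreover, for general $f\in L_1$ you have no a priori bound placing $(\beta(w))_{+}G(p(\beta(w)))$ in $L_1$ (by~\eqref{A2}, $G$ is merely continuous and nonincreasing, hence possibly unbounded below), and the same lack of domination undermines the limit passage in the reaction term during your density extension from $BV\cap L_1\cap L_\infty$ to all of $L_1$.

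The paper closes all of these gaps at once with an observation your proposal misses: the entire reaction term can be absorbed into the monotone nonlinearity. Setting $H(r)=\vf^{-1}(r)-\tau\,(\vf^{-1}(r))_{+}G(p(\vf^{-1}(r)))$, the hypotheses $\tau G(0)<1$, $G'\le 0$ and $p$ nondecreasing make $H$ increasing with $H(0)=0$ and $|H(r)|\ge \frac{1-\tau G(0)}{\norm{\vf'}_{L_{\infty}([-1,1])}}|r|$ for $|r|<1$, so that $-\tau\Delta v+H(v)=f$ is exactly the semilinear problem covered by \cite[Theorems 2.1 and 5.1]{BBC}. That theory delivers, in one stroke and for arbitrary $f\in L_1(\R^d)$, existence, uniqueness of distributional solutions, and the bounds $\norm{v}_{L_1}\le C\norm{f}_{L_1}$ and $\norm{H(v)}_{L_1}\le\norm{f}_{L_1}$; undoing the substitution $u=\vf^{-1}(v)$ gives~\eqref{Pw} together with both stated estimates, since $v=\vf(u)$ and $\norm{u-\tau(u)_{+}G(p(u))}_{L_1}\ge(1-\tau G(0))\norm{u}_{L_1}$. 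No regularisation of $\vf$, no $BV$ hypothesis, and no density argument are needed for this proposition; the approximation machinery you develop belongs to the subsequent step (Proposition~\ref{apro}), where it is legitimised precisely by the uniqueness established here.
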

\begin{proof}
Let $\vf$ satisfy  (\ref{A1}) and $\tau \in (0,\frac{1}{G(0)})$ be fixed. We discuss the problem
\eqq{
-\tau \Delta v + H(v) = f,
}{P}
with
$H(r) =  \vf^{-1}(r) - \tau (\vf^{-1}(r))_{+}G(p(\vf^{-1}(r))) \m{ for } r \in \mathbb{R}.$
Clearly, $H(0) = 0$ and $H$ is increasing, since $\vf^{-1}$ is increasing, $G$ is nonincreasing and $p$ is nondecreasing. Furthermore, for $\abs{r} < 1$
\[
\abs{H(r)} \geq (1-\tau G(0))\abs{\vf^{-1}(r)} \geq \frac{(1-\tau G(0))}{\norm{\vf'}_{L_{\infty}([-1,1])}}|r|.
\]
Hence, from \cite[Theorem 2.1, Theorem 5.1]{BBC} we obtain that for every $f \in L_{1}(\mathbb{R}^{d})$ there exists exactly one $v \in L_{1}(\mathbb{R}^d)$ which satisfies (\ref{P}) in the sense of distributions. Furthermore,
\eqq{
\norm{v}_{L_{1}(\mathbb{R}^d)}  \leq C\norm{f}_{L_{1}(\mathbb{R}^d)}, \hd \norm{H(v)}_{L_{1}(\mathbb{R}^d)}\leq \norm{f}_{L_{1}(\mathbb{R}^d)}.
}{ca}
Then, the function $u := \vf^{-1}(v)$ satisfies (\ref{Pw}). Since
\[
\norm{H(v)}_{L_{1}(\mathbb{R}^d)} = \norm{u - \tau (u)_{+}G(p(u))}_{L_{1}(\mathbb{R}^d)},
\]
estimates (\ref{ca}) imply
\[
\norm{\vf(u)}_{L_{1}(\mathbb{R}^d)}  \leq C\norm{f}_{L_{1}(\mathbb{R}^d)}, \hd
\norm{u}_{L_{1}(\mathbb{R}^d)} \leq \frac{1}{1-\tau G(0)}\norm{f}_{L_{1}(\mathbb{R}^d)}.\qedhere
\]
\end{proof}

We will now show that one may approximate the $L_1$ solutions to (\ref{Pw}) by the solutions to a regularized problem.
To this end, for $n\in\mathbb{N}$ let us define
\eqq{
\vf_n(t) = \int_{0}^{t}(\eta_{\frac{1}{n}}*\vf')(s)ds + \frac{t}{n},
}{vfapro}
where $\eta
_{1/n}$ denotes the standard smoothing kernel at scale $1/n$. Then $\vf_n$ satisfies (\ref{A1}) and $\vf_n \in C^{2}(\mathbb{R})$, $\vf_n' > 0$.  Furthermore, $\vf'_n$ converges to $\vf'$ uniformly on compact subsets of $\mathbb{R}$. We denote $A_n:= A_{\vf_{n},G,p}$.
\begin{prop}\label{apro}
    Let $f\in L_1(\R^d)$ and let $u$ be the unique solution of~\eqref{Pw} associated to $f$. There exists a sequence $\{f_n\}\subset L_1(\R^d)\cap L_\infty(\R^d)\cap BV(\R^d)$ such that $f_n\to f$ and $u_n\to u$ in $L_1$, where $u_n := (I-\tau A_{n})^{-1}f_n \in D(A_n)$.

\end{prop}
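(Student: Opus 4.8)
The plan is to prove the statement in two stages: first for \emph{regular} data, and then to reach a general $f\in L_1(\R^d)$ by density together with a diagonal extraction. (Note that, to be consistent with Propositions~\ref{image}--\ref{acr}, the resolvent in the statement should read $(I+\tau A_n)^{-1}$.) Concretely, I would first establish that for every fixed $g\in L_1(\R^d)\cap L_\infty(\R^d)\cap BV(\R^d)$ the sequence $v_n^g:=(I+\tau A_n)^{-1}g$ converges in $L_1$ to the unique solution $v^g=(I+\tau A)^{-1}g$ of~\eqref{Pw} with datum $g$. Granting this, I would pick any $g_k\in L_1\cap L_\infty\cap BV$ (for instance in $C_c^\infty$) with $g_k\to f$ in $L_1$, observe that $v^{g_k}\to u$, and then extract a diagonal sequence $f_n:=g_{k(n)}$ so that $u_n=(I+\tau A_n)^{-1}f_n\to u$.

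For the regular-data step the key tool is estimate~\eqref{estepsilon}. Applying it with $\vf_1=\vf_n$, $\vf_2=\vf_m$, $f_1=f_2=g$ and $G_1=G_2=G$, $p_1=p_2=p$, the growth/pressure contributions on the right-hand side vanish (since $G_1=G_2$, $p_1=p_2$), and using $\int\!\int\abs{g(x)-g(y)}q^\ve(x-y)\,dy\,dx\le\sqrt d\,\ve\norm{g}_{TV}$ one is left with
\begin{equation*}
\int_{\R^d}\int_{\R^d}\abs{v_n^g(x)-v_m^g(y)}q^\ve(x-y)\,dy\,dx\;\le\;\frac{2d\tau}{\ve(1-\tau G(0))^{2}}\,\delta_{n,m}\norm{g}_{TV(\R^d)}+\frac{\sqrt d\,\ve}{1-\tau G(0)}\norm{g}_{TV(\R^d)},
\end{equation*}
where $\delta_{n,m}:=\sup_{s\in I(g)}\bigl[\sqrt{\vf_n'(s)}-\sqrt{\vf_m'(s)}\bigr]^{2}$ and $I(g)$ is the interval from~\eqref{If1}. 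Controlling the gap between this doubled integral and $\norm{v_n^g-v_m^g}_{L_1}$ by the $BV$-modulus of continuity $\sqrt d\,\ve\norm{v_m^g}_{TV}\le\sqrt d\,\ve(1-\tau G(0))^{-1}\norm{g}_{TV}$ (via~\eqref{tv}), and then optimising the scale $\ve$, I obtain $\norm{v_n^g-v_m^g}_{L_1}\le C\sqrt{\delta_{n,m}}\,\norm{g}_{TV}$. Since $\vf_n'\to\vf'$ uniformly on the fixed compact interval $I(g)$ by~\eqref{vfapro} and $\sqrt{\cdot}$ is uniformly continuous there, $\delta_{n,m}\to0$ as $n,m\to\infty$, so $\{v_n^g\}$ is Cauchy and converges to some $v^g\in L_1$. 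To identify $v^g$ I use the uniform bound $-\norm{g_-}_{L_\infty}\le v_n^g\le(1-\tau G(0))^{-1}\norm{g_+}_{L_\infty}$ from Proposition~\ref{image}, which confines all $v_n^g$ to one fixed compact interval $J$; on $J$ one has $\vf_n\to\vf$ uniformly, so along an a.e.-convergent subsequence $\vf_n(v_n^g)\to\vf(v^g)$ and $(v_n^g)_+G(p(v_n^g))\to(v^g)_+G(p(v^g))$ boundedly, permitting passage to the limit in the distributional form of $v_n^g-\tau\Delta\vf_n(v_n^g)-\tau(v_n^g)_+G(p(v_n^g))=g$. By the uniqueness in Proposition~\ref{exi}, $v^g=(I+\tau A)^{-1}g$, and the whole sequence converges.

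For general $f$, the $L_1$-accretivity framework of~\cite{BBC} underlying Proposition~\ref{exi} gives that the true resolvent is an $L_1$-contraction, $\norm{(I+\tau A)^{-1}g_k-(I+\tau A)^{-1}f}_{L_1}\le(1-\tau G(0))^{-1}\norm{g_k-f}_{L_1}$, whence $v^{g_k}\to u$ in $L_1$; the same factor may alternatively be recovered by letting $n\to\infty$ in~\eqref{acretive} for $A_n$ using the regular-data step. Since $v_n^{g_k}\to v^{g_k}$ as $n\to\infty$ for each fixed $k$, and $v^{g_k}\to u$ as $k\to\infty$, a standard diagonal choice $k(n)\to\infty$ (pick $N_k\uparrow\infty$ with $\norm{v_n^{g_k}-v^{g_k}}_{L_1}<1/k$ for $n\ge N_k$, and set $k(n)=\max\{k:N_k\le n\}$) yields $f_n:=g_{k(n)}\to f$ and $u_n=(I+\tau A_n)^{-1}f_n\to u$ in $L_1$.

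The main obstacle is the regular-data step, and within it the fact that~\eqref{estepsilon} controls only a doubled, $q^\ve$-averaged quantity in which the factor $1/\ve$ multiplies the $\vf$-discrepancy, so one cannot simply send $\ve\to0$. The decisive observation is that this term is genuinely $O(\delta_{n,m}/\ve)$ while all remaining errors are $O(\ve)\norm{g}_{TV}$; optimising $\ve\sim\sqrt{\delta_{n,m}}$ then converts the uniform-on-compacts convergence $\vf_n'\to\vf'$ into quantitative $L_1$-Cauchyness. This is exactly where the hypothesis $g\in BV$ enters, and it is the reason the density/general-data argument must be decoupled from the regularisation and handled separately by the diagonal argument.
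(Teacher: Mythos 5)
Your proposal is correct in substance and rests on the same analytic engine as the paper -- estimate~\eqref{estepsilon} applied with $f_1=f_2$, $G_1=G_2$, $p_1=p_2$, the optimisation $\ve\sim\sqrt{\delta_{n,m}}$ that trades the $O(\delta/\ve)$ diffusion discrepancy against the $O(\ve)\norm{\cdot}_{TV}$ mollification error, and the identification of the $L_1$-limit through the distributional equation plus uniqueness from Proposition~\ref{exi} -- and you are right that the resolvent in the statement should be $(I+\tau A_n)^{-1}$. Where you genuinely diverge is in how the datum is approximated. The paper builds a \emph{single} sequence $f_n=\eta_{1/n}*f_{m(n)}$ by truncation and mollification of $f$ itself; since then $\norm{f_n}_{TV}\lesssim n$ grows, it must couple the truncation level $m(n)$ to the mollification scale so that $\sup_{s\in[-m,m/(1-\tau G(0))]}|\sqrt{\vf_n'(s)}-\sqrt{\vf_k'(s)}|\le n^{-2}$, via an inversion of the function $(n,m)\mapsto k(n,m)$; the Cauchy property of $u_n$ then comes out in one stroke. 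You instead freeze a regular datum $g$ first (so $\norm{g}_{TV}$ and $I(g)$ are fixed and the uniform convergence of $\vf_n'$ on compacts applies directly, with uniform $L_\infty$ bounds making the limit identification cleaner than in the paper), and only afterwards approximate $f$ by $g_k$ and diagonalise. This decoupling is conceptually tidier and avoids the $m(n)$ bookkeeping, but it buys you an extra obligation the paper's route never incurs: to run the diagonal argument you need $v^{g_k}=(I+\tau A)^{-1}g_k\to u$ in $L_1$, i.e.\ continuity of the \emph{un-regularised} resolvent on $L_1$, which Proposition~\ref{exi} does not state (the paper only derives the contraction property of $(I+\tau A)^{-1}$ in Proposition~\ref{Atheo}, \emph{as a consequence} of the present proposition). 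Your route (b) -- passing to the limit in~\eqref{acretive} -- only yields the contraction on $L_1\cap L_\infty\cap BV$ and still leaves the identification of $\lim_k v^{g_k}$ with $u$ to a limit passage without uniform $L_\infty$ bounds, where a.e.\ convergence of $\vf(v^{g_k})$ plus an $L_1$ bound does not by itself justify the distributional limit; so you should lean on route (a), i.e.\ the Bénilan--Brezis--Crandall $T$-contraction $\norm{H(v_1)-H(v_2)}_{L_1}\le\norm{f_1-f_2}_{L_1}$ combined with the lower bound $\Theta'\ge 1-\tau G(0)$ for $\Theta(s)=s-\tau(s)_+G(p(s))$, which does give $\norm{(I+\tau A)^{-1}g_k-(I+\tau A)^{-1}f}_{L_1}\le(1-\tau G(0))^{-1}\norm{g_k-f}_{L_1}$ for general $L_1$ data and closes the argument.
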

\begin{proof}
Let us define $f_{m}$ as a truncation of $f$ on the level $m$, i.e. $f_m = f\chi_{\{|f|\leq m\}} + m \sign{f}\chi_{\{|f| > m\}}$. Then $f_{m} \in L_{1}(\mathbb{R}^d) \cap L_{\infty}(\mathbb{R}^d)$. Further, we define $f_{m,n}:=\eta_{1/n}*f_m$. Then
\[
\norm{f_{m,n}}_{L_{\infty}(\mathbb{R}^d)}\leq \norm{ \eta_{1/n}}_{L_{1}(\mathbb{R}^{d})}\norm{f_{m}}_{L_{\infty}(\mathbb{R}^d)}  \leq  m,
\]
\eqq{
\norm{f_{m,n}}_{TV(\mathbb{R}^{d})} \leq \norm{f_{m}}_{L_{1}(\mathbb{R}^{d})}\norm{\nabla \eta_{1/n}}_{L_{1}(\mathbb{R}^{d})} \leq n \norm{\nabla \eta}_{L_{1}(\mathbb{R}^{d})}\norm{f}_{L_{1}(\mathbb{R}^{d})}.
}{fnmbounds}
We will now choose the index $m$ as a function of $n$, i.e, $m = m(n)$, and we denote $f_{m,n}$ as $f_n$.
By Proposition \ref{image} and Proposition \ref{acr}, for every $n \in \mathbb{N}$ there exists a unique $u_n \in L_{1}(\mathbb{R}^d)\cap  L_{\infty}(\mathbb{R}^d)\cap H^{1}(\mathbb{R}^d), \varphi_n(u_n) \in H^{2}(\mathbb{R}^d)$ (note that the uniqueness comes from (\ref{acretive})), such that
\eqq{
u_n -\tau \Delta \vf_n(u_n) - \tau (u_{n})_{+}G(p(u_{n})) = f_n.
}{cb}
We will show that $u_n$ is a Cauchy sequence. Estimate (\ref{estepsilon}) gives
\begin{align*}
    \int_{\mathbb{R}^{d}}\int_{\mathbb{R}^{d}}\abs{u_n-u_k}q^{\ve}(x-y)dydx &\leq \frac{1}{1-\tau G(0)}\int_{\mathbb{R}^{d}}\int_{\mathbb{R}^{d}}\abs{f_n-f_k}q^{\ve}(x-y)dydx\\
    &\quad+\frac{1}{(1-\tau G(0))^{2}}
\frac{2 d \tau}{\ve} \sup_{s\in I(f_{n})}\left[\sqrt{\vf'_{n}(s)} - \sqrt{\vf'_{k}(s)}\right]^2\norm{f_n}_{TV(\mathbb{R}^{d})}.
\end{align*}
Applying estimate (\ref{tv}) and
\[
\abs{\int_{\mathbb{R}^{d}}\int_{\mathbb{R}^{d}}\abs{f(x)-g(y)}q^{\ve}(x-y)dydx - \norm{f-g}_{L_{1}(\mathbb{R}^d)}  } \leq  \ve \norm{f}_{TV(\mathbb{R}^d)},
\]
we arrive at
\begin{align*}
    \norm{u_n-u_k}_{L_{1}(\mathbb{R}^{d})} &\leq \frac{2\ve}{1-\tau G(0)} \norm{f_n}_{TV(\mathbb{R}^{d})} + \frac{1}{1-\tau G(0)}\norm{f_n-f_k}_{L_{1}(\mathbb{R}^{d})}\\
    &\quad+ \frac{1}{(1-\tau G(0))^{2}}
\frac{2 d \tau}{\ve} \sup_{s\in I(f_{n})}\left[\sqrt{\vf'_{n}(s)} - \sqrt{\vf'_{k}(s)}\right]^2\norm{f_n}_{TV(\mathbb{R}^{d})}.
\end{align*}
Choosing $\ve = \sqrt{d\tau}\sup_{s\in I(f_{n})}\left|\sqrt{\vf'_{n}(s)} - \sqrt{\vf'_{k}(s)}\right|$ we get
\[
\norm{u_n-u_k}_{L_{1}(\mathbb{R}^{d})} \leq \frac{1}{1-\tau G(0)}\norm{f_n-f_k}_{L_{1}(\mathbb{R}^{d})} + \frac{4}{(1-\tau G(0))^{2}}
\sqrt{ d \tau}\sup_{s\in I(f_{n})}\left|\sqrt{\vf'_{n}(s)} - \sqrt{\vf'_{k}(s)}\right|\norm{f_n}_{TV(\mathbb{R}^{d})}.
\]
In view of (\ref{fnmbounds}) there holds
\[
\sup_{s\in I(f_{n})}\left|\sqrt{\vf'_{n}(s)} - \sqrt{\vf'_{k}(s)}\right|\norm{f_n}_{TV(\mathbb{R}^{d})} \leq  n \norm{\nabla \eta}_{L_{1}(\mathbb{R}^{d})}  \norm{f}_{L_{1}(\mathbb{R}^{d})} \sup_{s\in [-m,\frac{m}{1-\tau G(0)}]}\left|\sqrt{\vf'_{n}(s)} - \sqrt{\vf'_{k}(s)}\right|.
\]
Uniform convergence of $\vf'_n$ on compact subsets implies that
\[
\forall n>0 \hd  \forall m>0 \hd  \exists k=k(n,m) \hd \sup_{\tilde{k} \geq k} \sup_{s\in [-m,\frac{m}{1-\tau G(0)}]}\left|\sqrt{\vf'_{k}(s)} - \sqrt{\vf'_{\tilde{k}}(s)}\right| \leq \frac{1}{n^{2}}.
\]
Since we may choose $k$ as an increasing function of $m$, we may also invert it to obtain
\[
\forall n>0 \hd  \hd \forall  k \hd  \exists m=m(n,k) \hd \sup_{\tilde{k}\geq k} \sup_{s\in [-m,\frac{m}{1-\tau G(0)}]}\left|\sqrt{\vf'_{k}(s)} - \sqrt{\vf'_{\tilde{k}}(s)}\right| \leq \frac{1}{n^{2}}.
\]
Choosing $k=n$ we arrive at
\[
 \forall n  \hd  \exists m=m(n) \hd \sup_{\tilde{k}\geq n}  \sup_{s\in [-m,\frac{m}{1-\tau G(0)}]}\left|\sqrt{\vf'_{n}(s)} - \sqrt{\vf'_{\tilde{k}}(s)}\right| \leq \frac{1}{n^{2}}.
\]
Hence, for such a choice of $f_n$, $u_n$ is a Cauchy sequence in $L_{1}(\mathbb{R}^d)$. Therefore, there exists a function $\tilde{u}\in L_1(\R^d)$ such that $u_n \rightarrow \tilde{u}$ in $L_1$ and, passing to a subsequence, we may assume that $u_n \rightarrow \tilde{u}$ almost everywhere. Since $G$ and $p$ are continuous, $G(p(u_{n})) \rightarrow G(p(\tilde{u}))$ almost everywhere in $\mathbb{R}^d$. Since $G(p(u_{n}))$ is bounded, it has a weakly-star convergent subsequence in $L_{\infty}(\mathbb{R}^d)$. We deduce that $(u_{n})_{+}G(p(u_{n}))\rightharpoonup (\tilde{u})_{+}G(p(\tilde{u}))$ in $L_{1}(\mathbb{R}^d)$. Consequently, from (\ref{cb}), also $\Delta \vf_{n}(u_{n})$ converges weakly in $L_{1}(\mathbb{R}^d)$.
We will show that it converges to $\Delta\vf(\tilde{u})$, where the Laplacian is understood in the sense of distributions. Recall that $\vf_{n}$ converges to $\vf$ pointwise and uniformly on compact subsets of $\mathbb{R}$. Since $u_{n} \rightarrow \tilde{u}$ almost everywhere and $\vf$ is continuous, we have $\vf(u_{n}) \rightarrow \vf(\tilde{u})$ almost everywhere. Moreover,
\[
\abs{\vf_n(u_n(x)) - \vf(u_n(x))} \leq \sup_{t \in I(f_n)} \abs{\vf_{n}(t) - \vf(t)}.
\]
 Thus, for a.e.\ $x\in\R^d$,
\[
\abs{\vf_n(u_n(x)) - \vf(\tilde{u}(x))} \leq \abs{\vf_n(u_n(x)) - \vf(u_n(x))} + \abs{\vf(u_n(x)) - \vf(\tilde{u}(x))} \rightarrow 0 \hd
\]
and $\vf_{n}(u_n) \rightarrow  \vf(\tilde{u})$ a.e.
Finally, for any $\Phi \in C^{\infty}_{0}(\mathbb{R}^d)$
\[
\tau\int_{\mathbb{R}^d} \vf_n(u_n) \Delta \Phi dx = \int_{\mathbb{R}^d} u_n \Phi dx - \tau\int_{\mathbb{R}^d} (u)_{+}G(p(u))\Phi dx - \int_{\mathbb{R}^d} f \Phi dx,
\]
and the right-hand side converges, so also the left-hand side is convergent and from the pointwise limit we have
\[
\int_{\mathbb{R}^d} \vf_n(u_n) \Delta \Phi dx \rightarrow \int_{\mathbb{R}^d} \vf(\tilde{u}) \Delta \Phi dx.
\]
Thus, from the uniqueness of the limit $\Delta \vf_{n}(u_{n})$ converges weakly in $L_{1}(\mathbb{R}^d)$ to $\Delta \vf(\tilde{u})$. Hence, $\tilde{u}$ satisfies (\ref{Pw}) in the sense of distributions and from the uniqueness of the solution $\tilde{u} = u$.
\end{proof}
As a consequence of the previous approximation result, we can deduce that the properties demonstrated in the previous section hold also for the un-regularised $\vf$. More precisely, we have:
\begin{prop}\label{Atheo}
Let $\vf,G,p$ satisfy (\ref{A1})--(\ref{A3}) and let $\tau \in (0,\frac{1}{G(0)})$.
Define
\eqq{
A(u)= -\Delta \vf(u) - (u)_{+}G(p(u)), \hd D(A) = \{u\in L_{1}(\mathbb{R}^d): -\Delta \vf(u) \in L_{1}(\mathbb{R}^d)\}.
}{Al1def}
Then $R(I+\tau A) = L_{1}(\mathbb{R}^{d})$ and the results of Proposition \ref{image}, \ref{exi}, and  \ref{acr} hold for $A$ defined by \eqref{Al1def}.
\end{prop}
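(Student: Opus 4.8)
The plan is to read off the range identity from the $L_1$-solvability result of Proposition~\ref{exi} and then to transfer the quantitative bounds of Propositions~\ref{image} and~\ref{acr} from the smooth operators $A_n=A_{\vf_n,G,p}$ to the un-regularised $A$ by passing to the limit $n\to\infty$ along the approximation of Proposition~\ref{apro}. The range is in fact immediate: Proposition~\ref{exi} (which only needs (\ref{A1})--(\ref{A3}) and $\tau\in(0,1/G(0))$, exactly our hypotheses) provides, for each $f\in L_1(\R^d)$, a unique $u\in L_1(\R^d)$ solving~\eqref{Pw}, i.e. $(I+\tau A)u=f$; since $u,\,(u)_+G(p(u))\in L_1(\R^d)$, the equation forces $-\Delta\vf(u)\in L_1(\R^d)$, so $u\in D(A)$ in the sense of~\eqref{Al1def} and $R(I+\tau A)=L_1(\R^d)$. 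Uniqueness makes $(I+\tau A)^{-1}$ single-valued, and the two inequalities of Proposition~\ref{exi} are precisely its $L_1$-bounds, so the conclusions of Proposition~\ref{exi} hold for $A$ by definition.

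To recover the remaining statements of Proposition~\ref{image} and the estimates~\eqref{acretive}--\eqref{tv} of Proposition~\ref{acr}, I would fix data in $L_1\cap L_\infty$, invoke Proposition~\ref{apro} to get $f_n\to f$ and $u_n=(I+\tau A_n)^{-1}f_n\to u=(I+\tau A)^{-1}f$ in $L_1$, and pass to a subsequence so that convergence is also pointwise a.e. Since the truncation-mollification defining $f_n$ does not increase $\norm{(\cdot)_\pm}_{L_r}$, one has $\norm{(f_n)_\pm}_{L_r}\le\norm{f_\pm}_{L_r}$ for all $n$; applying the bounds of Proposition~\ref{image}.2--3 to $(u_n,f_n)$ and then using Fatou's lemma on the left for $r<\infty$ (and the a.e. pointwise inequalities for the sup-norm and part~3) yields the same bounds for $(u,f)$. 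The accretivity estimate~\eqref{acretive} passes to the limit directly, both sides converging in $L_1$; for the total-variation bound~\eqref{tv} I would take mollified $BV$ data so that $\norm{f_n}_{TV}\le\norm{f}_{TV}$, and combine this with the lower semicontinuity of the total variation under $L_1$-convergence, $\norm{u}_{TV}\le\liminf_n\norm{u_n}_{TV}\le\tfrac{1}{1-\tau G(0)}\norm{f}_{TV}$.

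The main obstacle is the doubling-of-variables estimate~\eqref{estepsilon}, whose right-hand side depends on the derivatives of $\vf_i$ through $\sup_{s\in I(f_1)}[\sqrt{\vf_1'(s)}-\sqrt{\vf_2'(s)}]^2$; this forces me to keep the data fixed and regularise only the nonlinearities. I would first take $f_1,f_2\in L_1\cap L_\infty\cap BV$, set $\vf_{i,n}$ as in~\eqref{vfapro} with $G_i,p_i$ unchanged, and apply~\eqref{estepsilon} to the smooth operators $A_{\vf_{i,n},G_i,p_i}$. For fixed $f_i\in BV$ the Cauchy argument of Proposition~\ref{apro} applies verbatim with $f_{i,n}\equiv f_i$ (only the $\vf$-term survives and tends to zero), giving $u_i^n\to u_i$ in $L_1$, so the left-hand integral and all convolution integrals converge. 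The crucial point is that $\vf_{i,n}'\to\vf_i'$ uniformly on the compact interval $I(f_1)$, whence by uniform continuity of the square root on bounded sets $\sqrt{\vf_{i,n}'}\to\sqrt{\vf_i'}$ uniformly there, so the supremum term converges to its un-regularised value; the intervals $I(f_1),I_{p_1,p_2}(f_1),I_{p_i}(f_1)$ and the quantities $\norm{f_1}_{TV},\norm{f_1}_{L_1}$ are independent of $n$, so the remaining $G$- and $p$-terms are unchanged. Finally, to reach the stated generality $f_2\in L_1\cap L_\infty$ without the $BV$ hypothesis on $f_2$, I would approximate $f_2^{(j)}\to f_2$ in $L_1$ by $BV\cap L_1\cap L_\infty$ functions and pass to the limit in~\eqref{estepsilon}: the right-hand side depends on $f_2$ only through $\int_{\R^d}\int_{\R^d}\abs{f_1-f_2}q^\ve\,dy\,dx$, which converges, while the already-established accretivity~\eqref{acretive} for $A_2$ gives $u_2^{(j)}\to u_2$ in $L_1$ so that the left-hand side converges as well, all $f_1$-dependent terms remaining fixed.
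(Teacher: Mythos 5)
Your proposal is correct and follows essentially the same route as the paper: the range identity is read off from Proposition~\ref{exi}, the bounds of Proposition~\ref{image} pass to the limit along the approximation of Proposition~\ref{apro} by lower semicontinuity, and~\eqref{estepsilon} is recovered by freezing $f_1$ (taking $f_{1,n}\equiv f_1$) so that only the nonlinearity is regularised and the supremum term converges by uniform convergence of $\vf'_{i,n}$ on the fixed compact interval $I(f_1)$. The only cosmetic differences are that you derive~\eqref{tv} from lower semicontinuity of the total variation rather than from accretivity plus translation invariance, and that you relax the $BV$ hypothesis on $f_2$ in a separate second approximation step instead of carrying the sequence $f_{2,n}\to f_2$ from Proposition~\ref{apro} throughout; both variants are sound.
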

\begin{proof}
The fact that $R(I+\tau A) = L_{1}(\mathbb{R}^{d})$ follows from Proposition \ref{exi}. The results 2. and 3. of Proposition~\ref{image} follow from Proposition \ref{apro} by weak lower semicontinuity of norm. To show that the results of Proposition~\ref{acr} hold for $A$ defined by \eqref{Al1def}, we take for $i=1,2$ $f_i \in L_{1}(\mathbb{R}^d)$ and $\vf_i,G_i,p_i$  satisfying (\ref{A1})--(\ref{A3}) and denote by $\vf_{i,n}$ the approximation of $\vf_i$ given by (\ref{vfapro}).  Further, we take sequences $f_{i,n} \in L_{1}(\mathbb{R}^d)\cap L_{\infty}(\mathbb{R}^d)\cap BV(\mathbb{R}^d)$ with $f_{i,n} \rightarrow f_{i}$ in $L_{1}(\mathbb{R}^d)$ from Proposition \ref{apro}. Then we denote $u_{i,n} := (I+\tau A_{\vf_{i,n},G_i,p_i})^{-1}f_{i,n}$. From (\ref{acretive}) we obtain
\[
\norm{u_{1,n} - u_{2,n}}_{L_{1}(\mathbb{R}^d)} \leq \frac{1}{1-\tau G(0)}\norm{f_{1,n} - f_{2,n}}_{L_{1}(\mathbb{R}^d)},
\]
and passing to the limit $n\to\infty$ we arrive at
\[
\norm{u_{1} - u_{2}}_{L_{1}(\mathbb{R}^d)} \leq \frac{1}{1-\tau G(0)}\norm{f_{1} - f_{2}}_{L_{1}(\mathbb{R}^d)}.
\]
Let us now suppose that $f_1 \in  L_{1}(\mathbb{R}^{d})\cap L_{\infty}(\mathbb{R}^d)\cap BV(\R^d)$. Then, estimate (\ref{tv}) follows easily from accretivity and translation invariance of $A$. Finally, taking $f_{1,n} \equiv f_1$ (which also gives $u_{1,n} \rightarrow u_1$ in $L_{1}(\mathbb{R}^d)$), estimate (\ref{estepsilon}) gives
\[
\int_{\mathbb{R}^{d}}\int_{\mathbb{R}^{d}}\abs{u_{1,n}-u_{2,n}}q^{\ve}(x-y)dydx
\]
\[
\leq \frac{1}{(1-\tau G(0))^{2}}
\frac{2 d \tau}{\ve} \sup_{s\in I(f_{1})}\left[\sqrt{\vf'_{1,n}(s)} - \sqrt{\vf'_{2,n}(s)}\right]^2\norm{f_{1}}_{TV(\mathbb{R}^{d})} 
\]
\[
+ \frac{1}{1-\tau G(0)}\int_{\mathbb{R}^{d}}\int_{\mathbb{R}^{d}}\abs{f_{1}-f_{2,n}}q^{\ve}(x-y)dydx
\]
\[
+
 \frac{\tau}{(1-\tau G(0))^{2}} \left(\norm{G'_2}_{L_{\infty}(I_{p_1,p_2}(f_1))}\sup_{s \in I(f_{1})}\abs{p_{2}(s)-p_{1}(s)}+\norm{G_2-G_1}_{L_{\infty}(I_{p_1}(f_{1}))}\right)\norm{f_{1}}_{L_{1}(\mathbb{R}^{d})}.
\]
Passing to the limit with $n$, we obtain the claim.
\end{proof}

\section{Stability results}
\label{sec:Stability}
We are now ready to establish the main results of this paper. Let us first recall the classical theorem of Crandall-Liggett~\cite{CL} regarding the existence of mild solutions to the abstract evolution equation $u_t= Au$.
\begin{theo}\cite{CL}\label{CL}
    Let $\omega \geq 0$ and $A$ be an $\omega$-accretive operator in a Banch space $X$ with $R(I+\tau A) = X$ for sufficiently small positive $\tau$. Then, for any $u_0 \in X$ the limit
\[
S_{t}(A)u_0:= \lim_{n\rightarrow \infty}(I+\frac{t}{n} A)^{-n}u_0
\]
exists uniformly on compact subsets of $[0,\infty)$. Moreover, the family of operators $S_{t}(A)$, $t > 0$, is a strongly continuous semigroup of contractive mappings of $D(A) \subset X$ and
$u(t):= S_{t}(A)u_0$ is the unique mild solution of $u_t = Au$ with $u(0)=u_0$.
Furthermore, if $u_0 \in D(A)$, then
\[
\norm{u(t)-(I+\frac{t}{n} A)^{-n}u_0}_{X} \leq \frac{t}{\sqrt{n}}\norm{A u_{0}}.
\]
\end{theo}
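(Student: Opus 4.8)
The plan is to follow the classical nonlinear-semigroup argument of Crandall and Liggett. The only structural inputs are the two consequences of $\omega$-accretivity together with the range condition: for $\lambda\omega<1$ the resolvent $J_\lambda := (I+\lambda A)^{-1}$ is a well-defined single-valued map on all of $X$ with $\norm{J_\lambda x - J_\lambda y}\leq (1-\lambda\omega)^{-1}\norm{x-y}$, and for $u_0\in D(A)$ the one-step bound $\norm{J_\lambda u_0 - u_0}\leq \tfrac{\lambda}{1-\lambda\omega}\norm{Au_0}$ holds (this is the standard estimate on the Yosida approximation $\tfrac1\lambda(I-J_\lambda)u_0$). Everything else is a quantitative Cauchy argument for the iterates $J_{t/n}^{\,n} u_0$.

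The heart of the matter is a comparison estimate between two discretisations $J_\lambda^m u_0$ and $J_\mu^n u_0$ at possibly different step sizes $\mu\leq\lambda$. The algebraic engine is the resolvent identity
\[
J_\lambda y = J_\mu\Bigl(\tfrac{\mu}{\lambda}y + \bigl(1-\tfrac{\mu}{\lambda}\bigr)J_\lambda y\Bigr),
\]
which lets one replace a $J_\lambda$-step by a $J_\mu$-step applied to a convex combination. Writing $c_{m,n} := \norm{J_\lambda^m u_0 - J_\mu^n u_0}$, applying the identity to the outermost $J_\lambda$ in $J_\lambda^m u_0$ and using the contraction property gives the two-index recursion
\[
c_{m,n} \leq \frac{1}{1-\mu\omega}\Bigl[\tfrac{\mu}{\lambda}\,c_{m-1,n-1} + \bigl(1-\tfrac{\mu}{\lambda}\bigr)c_{m,n-1}\Bigr],
\]
with boundary data $c_{m,0}$ and $c_{0,n}$ controlled by telescoping the one-step bound, e.g.\ $c_{m,0}\leq m\lambda\,\norm{Au_0}/(1-\lambda\omega)$. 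Solving this recursion by induction on $n$ — which amounts to summing a binomial kernel against the boundary terms — yields an explicit bound of the form
\[
\norm{J_\lambda^m u_0 - J_\mu^n u_0} \leq C(\omega,t)\,\norm{Au_0}\,\bigl[(m\lambda - n\mu)^2 + m\lambda^2 + n\mu^2\bigr]^{1/2}.
\]

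With this estimate in hand the remaining steps are essentially bookkeeping. Choosing $\lambda = t/m$ and $\mu = t/n$ makes the bracket equal to $t^2(1/m+1/n)$, so $(J_{t/n}^{\,n} u_0)_n$ is Cauchy and its limit defines $S_t(A)u_0$, with convergence uniform on compact $t$-intervals because the bound is. The extension from $D(A)$ to $\overline{D(A)}$ follows from the uniform $(1-\lambda\omega)^{-1}$-contraction of the iterates, which also yields the contraction property of $S_t$; the semigroup identity and strong continuity pass to the limit from the corresponding elementary facts about finite resolvent products. Letting $m\to\infty$ in the comparison estimate with $\mu=t/n$ fixed produces exactly the stated rate $\norm{u(t)-J_{t/n}^{\,n} u_0}\leq (t/\sqrt{n})\norm{Au_0}$. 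Finally, the limit is identified with the unique mild (integral) solution of $u_t = Au$ via the standard consistency of the implicit Euler scheme with B\'enilan's integral-solution framework, in which uniqueness is encoded by the accretivity of $A$.

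I expect the main obstacle to be the comparison estimate itself: deriving the correct two-index recursion from the resolvent identity and then solving it sharply enough to extract the optimal $1/\sqrt{n}$ rate — rather than a cruder, non-uniform bound — requires careful combinatorial control of how the one-step errors accumulate through the binomial weights.
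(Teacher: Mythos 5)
This statement is quoted from Crandall--Liggett \cite{CL} and the paper offers no proof of its own, so the only meaningful comparison is with the original argument, which your outline reproduces faithfully: the resolvent identity, the two-index recursion for $c_{m,n}$, the binomial-kernel bound $\bigl[(m\lambda-n\mu)^2+m\lambda^2+n\mu^2\bigr]^{1/2}$, and the specialisation $\lambda=t/m$, $\mu=t/n$ are exactly the classical proof. The one small caveat is that for $\omega>0$ your constant $C(\omega,t)$ (and the quasi-contractivity of $S_t$) carries an $e^{\omega t}$-type factor that the paper's statement of the final rate suppresses, but that imprecision is in the quoted theorem itself, not in your argument.
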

Note that Proposition \ref{Atheo} establishes that operator $A$ defined in (\ref{Al1def}) is $G(0)$-accretive and satisfies the assumptions of Theorem \ref{CL} in $L_{1}(\mathbb{R}^d)$.
\begin{remark}\label{re1}
The mild solution $u$ to (\ref{r1}) may be constructed in the following approximation procedure. Let
\[
u_{n} = (I+\tau A)^{-1}u_{n-1} \m{ and }u_{0} = u^{0},\; n\in\mathbb{N}
\]
where $A$ is defined in (\ref{Al1def}), $\tau \in (0,\frac{1}{G(0)})$ and define the piecewise constant function
\[
u^n(t) = u_{n} \m{ for }t \in [n\tau,(n+1)\tau),\; n\in\mathbb{N}.
\]
Then
\[
u^{n}(n\tau) = (I+\tau A)^{-1}u^{n}((n-1)\tau) \hd \m{ and } \hd u(t) = \lim_{n\rightarrow \infty}u^n(t).
\]
\end{remark}
\begin{theo}\label{thm:Main1}
Consider the two problems
\eqq{
\partial_{t} u = \Delta (\varphi_i (u)) +(u)_{+
}G_i(p_i(u))\hd  \m{ in } \mathbb{R}^d \times (0,T), \hd  u(0) = u^{0}_i  \m{ in } \mathbb{R}^{d}, \hd i = 1,2
}{problems}
such that for $i=1,2$  $u^0_i \in L_{1}(\mathbb{R}^d)$, and  $\vf_i, G_i,p_i$ satisfy  (\ref{A1})--(\ref{A3}).
Let $u_1, u_2$ denote the mild solutions to (\ref{problems}) for $i=1,2$ respectively, given by Theorem \ref{CL}. 
Suppose that $u_1^{0} \in L_{\infty}(\mathbb{R}^d)\cap BV(\mathbb{R}^{d})$. Then, for almost all $t \in (0,T)$ there holds
\[
\norm{u_1(t)-u_2(t)}_{L_{1}(\mathbb{R}^{d})}
 \leq  e^{tG(0)}\norm{u^0_1-u^0_2}_{L_{1}(\mathbb{R}^{d})}
+4 \sqrt{d t } e^{tG(0)} \norm{u^{0}_1}_{TV(\mathbb{R}^{d})}
\sup_{s\in I^{t}(u^0_1)}\left|\sqrt{\vf'_{1}(s)} - \sqrt{\vf'_{2}(s)}\right|
\]
\[
+
 te^{tG(0)} \norm{G'_2}_{L_{\infty}(I_{p_1,p_2}^t(u^0_1))}
 \sup_{s\in I^{t}(u^0_1) }\abs{p_{2}(s)-p_{1}(s)} \norm{u^{0}_1}_{L_{1}(\mathbb{R}^{d})}
 +  te^{tG(0)}\norm{G_2-G_1}_{L_{\infty}(I_{p_1}^t(u^0_1))}\norm{u^{0}_1}_{L_{1}(\mathbb{R}^{d})},
\]
where $G(0)=\max\{G_1(0),G_{2}(0)\}$ and
\[
I^{t}(u^0_1):= \left[-\norm{(u^0_1)_{-}}_{L_{\infty}(\mathbb{R}^d)},e^{tG(0)}\norm{(u^0_1)_+}_{L_{\infty}(\mathbb{R}^d)}\right],
\]
\[
I_{p_1}^t(u^0_1):= \left[p_1\left(-\norm{(u^0_1)_{-}}_{L_{\infty}(\mathbb{R}^d)}\right),p_1\left(e^{tG(0)}\norm{(u^0_1)_+}_{L_{\infty}(\mathbb{R}^d)}\right)\right],
\]
\[
I_{p_1,p_2}^t(u^0_1):= \left[\min_{i=1,2}p_i\left(-\norm{(u^0_1)_{-}}_{L_{\infty}(\mathbb{R}^d)}\right),\max_{i=1,2}p_i\left(e^{tG(0)}\norm{(u^0_1)_+}_{L_{\infty}(\mathbb{R}^d)}\right)\right].
\]

Furthermore, similar stability result holds for the approximations $u_{i,n}$, defined in Remark \ref{re1} by $
u_{i,n} = (I+\tau A_{\vf_i,G_i,p_i})^{-1}u_{i,n-1} \m{ and }u_{i,0} = u_i^{0},\; n\in\mathbb{N}
$, namely
\[
\norm{u_{1,n}-u_{2,n}}_{L_{1}(\mathbb{R}^{d})}
 \leq  e^{n\tau G(0)}\norm{u^0_1-u^0_2}_{L_{1}(\mathbb{R}^{d})}
+4 \sqrt{d n \tau } e^{n\tau  G(0)} \norm{u^{0}_1}_{TV(\mathbb{R}^{d})}
\sup_{s\in I^{n\tau}(u^0_1)}\left|\sqrt{\vf'_{1}(s)} - \sqrt{\vf'_{2}(s)}\right|
\]
\eqq{
+
 n\tau e^{n \tau G(0)} \norm{G'_2}_{L_{\infty}(I_{p_1,p_2}^{n\tau}(u^0_1))}
 \sup_{s\in I^{n\tau}(u^0_1) }\abs{p_{2}(s)-p_{1}(s)} \norm{u^{0}_1}_{L_{1}(\mathbb{R}^{d})}
 +  n\tau e^{n\tau G(0)}\norm{G_2-G_1}_{L_{\infty}(I_{p_1}^{n\tau}(u^0_1))}\norm{u^{0}_1}_{L_{1}(\mathbb{R}^{d})}.
}{discrstab}
Finally, for $u^0_i \in D(A)$ as in \eqref{Al1def} we have
\eqq{
\norm{u_i(t)-u_{i,n}}_{L_{1}(\mathbb{R}^d)} \leq \frac{t}{\sqrt{n}}\norm{A u_{0}^i}.
}{CLform}
\end{theo}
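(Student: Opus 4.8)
The plan is to prove the discrete estimate \eqref{discrstab} by iterating the one‑step localised bound \eqref{estepsilon} along the implicit scheme, and then to recover the continuous estimate and \eqref{CLform} by passing to the limit $\tau\to0$ through the Crandall--Liggett approximation of Remark~\ref{re1}. Throughout I write $\beta=(1-\tau G(0))^{-1}\geq1$, set $A_i=A_{\vf_i,G_i,p_i}$ and work with $u_{i,n}=(I+\tau A_i)^{-1}u_{i,n-1}$, $u_{i,0}=u_i^{0}$. By Proposition~\ref{Atheo} the estimates \eqref{acretive}, \eqref{tv} and \eqref{estepsilon} of Proposition~\ref{acr} are available for the un‑regularised operators of \eqref{Al1def}, so they may be applied directly to each resolvent step.

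First I would record the a priori bounds that control every quantity appearing in \eqref{estepsilon} along the scheme. Iterating part~3 of Proposition~\ref{image} gives $\norm{(u_{1,n})_+}_{L_\infty}\le\beta^{\,n}\norm{(u_1^0)_+}_{L_\infty}$ and $\norm{(u_{1,n})_-}_{L_\infty}\le\norm{(u_1^0)_-}_{L_\infty}$, so the step‑$n$ interval $I(u_{1,n-1})$ from \eqref{If1} is contained in $I^{n\tau}(u_1^0)$; since each $p_i$ is nondecreasing, the associated $p$‑intervals lie in $I^{n\tau}_{p_1}(u_1^0)$ and $I^{n\tau}_{p_1,p_2}(u_1^0)$. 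Iterating \eqref{tv} and \eqref{acretive} yields the monotone bounds $\norm{u_{1,n}}_{TV}\le\beta^{\,n}\norm{u_1^0}_{TV}$ and $\norm{u_{1,n}}_{L_1}\le\beta^{\,n}\norm{u_1^0}_{L_1}$; in particular each $u_{1,n}$ stays in $BV\cap L_\infty$, as is required to apply \eqref{estepsilon} at the next step (this is where the hypothesis $u_1^{0}\in L_\infty\cap BV$ enters and propagates). Consequently every $\vf$‑, $p$‑ and $G$‑discrepancy factor in \eqref{estepsilon} can be replaced, uniformly in $n$, by its supremum over the fixed enlarged intervals; denote these uniform constants by $\Phi^{\ast}=\sup_{s\in I^{n\tau}(u_1^0)}\abs{\sqrt{\vf_1'(s)}-\sqrt{\vf_2'(s)}}$ and $P^{\ast}=\norm{G_2'}_{L_\infty(I^{n\tau}_{p_1,p_2})}\sup_{s\in I^{n\tau}}\abs{p_2(s)-p_1(s)}+\norm{G_2-G_1}_{L_\infty(I^{n\tau}_{p_1})}$.

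The crucial step, and the one where care is needed, is the iteration itself. Applying \eqref{estepsilon} with $f_i=u_{i,n-1}$ and keeping the mollification scale $\ve$ \emph{fixed}, I set $a_n(\ve)=\int_{\R^{d}}\int_{\R^{d}}\abs{u_{1,n}(x)-u_{2,n}(y)}q^{\ve}(x-y)\,dy\,dx$ and obtain the linear recursion
\[
a_n(\ve)\le\beta\,a_{n-1}(\ve)+\beta^{2}\Big[\tfrac{2d\tau}{\ve}(\Phi^{\ast})^{2}\,\beta^{\,n-1}\norm{u_1^0}_{TV}+\tau P^{\ast}\,\beta^{\,n-1}\norm{u_1^0}_{L_1}\Big].
\]
Unrolling from $a_0(\ve)$ gives $a_n(\ve)\le\beta^{\,n}a_0(\ve)+n\beta^{\,n+1}\big[\tfrac{2d\tau}{\ve}(\Phi^{\ast})^{2}\norm{u_1^0}_{TV}+\tau P^{\ast}\norm{u_1^0}_{L_1}\big]$. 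I emphasise that one must \emph{not} optimise $\ve$ at each step: doing so would convert the accumulated $\vf$‑error into a term of size $n\sqrt{d\tau}\,\Phi^{\ast}$, which is a factor $\sqrt n$ too large. Instead $\ve$ is frozen through the whole iteration and one converts to the $L_1$‑norm only at the end, via $a_n(\ve)\ge\norm{u_{1,n}-u_{2,n}}_{L_1}-\ve\beta^{\,n}\norm{u_1^0}_{TV}$ and $a_0(\ve)\le\norm{u_1^0-u_2^0}_{L_1}+\ve\norm{u_1^0}_{TV}$, and only \emph{then} chooses $\ve$ to balance the single correction $2\ve\beta^{\,n}\norm{u_1^0}_{TV}$ against the accumulated term $\tfrac{2nd\tau}{\ve}\beta^{\,n+1}(\Phi^{\ast})^{2}\norm{u_1^0}_{TV}$. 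Because the quadratic $(\Phi^{\ast})^{2}$ is divided by $\ve$ while the correction is linear in $\ve$, this single balance $\ve\sim\Phi^{\ast}\sqrt{nd\tau}$ produces a term of order $4\sqrt{dn\tau}\,\beta^{\,n}\Phi^{\ast}\norm{u_1^0}_{TV}$, linear in $\Phi^{\ast}$ and in $\norm{u_1^0}_{TV}$, with the correct $\sqrt{dn\tau}$ scaling. Collecting the three contributions yields \eqref{discrstab} with the factor $\beta^{\,n}=(1-\tau G(0))^{-n}$ in place of $e^{n\tau G(0)}$.

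Finally I would pass to the limit. Taking $\tau=t/n$ and letting $n\to\infty$, Remark~\ref{re1} together with Theorem~\ref{CL} gives $u_{i,n}\to u_i(t)$ strongly in $L_1$, while $(1-\tfrac{t}{n}G(0))^{-n}\to e^{tG(0)}$, $\sqrt{dn\tau}\to\sqrt{dt}$, $n\tau\to t$, and the enlarged intervals $I^{n\tau}$ shrink to $I^{t}$ (and likewise for the $p$‑intervals); continuity of $\vf_i'$, $p_i$ and $G_i'$ then forces the $\Phi^{\ast}$‑ and $P^{\ast}$‑suprema to converge to those over $I^{t}$. Lower semicontinuity of the $L_1$‑norm under the strong convergence $u_{i,n}\to u_i(t)$ then delivers the stated continuous estimate. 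The last assertion \eqref{CLform} is immediate: for $u_i^{0}\in D(A)$ it is precisely the convergence rate in the final display of Theorem~\ref{CL} applied to $A_{\vf_i,G_i,p_i}$. The main obstacle is exactly the bookkeeping described above — freezing $\ve$ across all $n$ steps and optimising it only once at the end — since this is what prevents the $\vf$‑error from degrading to $n\sqrt{d\tau}$ and secures the sharp $\sqrt{dt}$ dependence in the limit.
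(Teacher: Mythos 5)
Your proposal follows essentially the same route as the paper: iterate the one-step localised estimate \eqref{estepsilon} along the implicit scheme with the mollification scale $\ve$ frozen, control the $L_\infty$, $L_1$ and $TV$ norms of the iterates via Propositions~\ref{image} and~\ref{Atheo}, convert to the $L_1$-norm and optimise $\ve\sim\sqrt{dn\tau}\,\sup\abs{\sqrt{\vf_1'}-\sqrt{\vf_2'}}$ only once at the end, and then pass to the Crandall--Liggett limit $\tau=t/n\to0$, with \eqref{CLform} read off directly from Theorem~\ref{CL}. This matches the paper's proof, including the key point you emphasise about not re-optimising $\ve$ at each step, and your honest remark that the direct iteration yields the factor $(1-\tau G(0))^{-n}$ rather than $e^{n\tau G(0)}$ mirrors the paper's own intermediate estimate \eqref{discrstab1}.
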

\begin{proof}
Estimate (\ref{CLform}) follows from the Crandall-Liggett formula (Theorem \ref{CL}). To show the stability result, let us begin with the discretisation.  We set
\[
u^n_i(t) = u_{i, n} \m{ for }t \in [n\tau,(n+1)\tau),\ n~\in~\mathbb{N}
\]
where $u_{i,n} = (I+\tau A_{\vf_i,G_i,p_i})^{-1}u_{i,n-1} \m{ and }u_{i,0} = u^{0}_i, \m{ for }i=1,2.$
Then $u^{n}_i(n\tau) = (I+\tau A)^{-1}u^{n}_{i}((n-1)\tau)$. For $t \in (0,T]$ we choose $\tau \in (0,\frac{1}{G(0)})$ and $n \in \mathbb{N}$ such that $t = n\tau$. We apply (\ref{estepsilon}) to obtain

\[
\int_{\mathbb{R}^{d}}\int_{\mathbb{R}^{d}}\abs{u^n_1(n\tau,x)-u^n_2(n\tau,y)}q^{\ve}(x-y)dydx
\]
\[
 \leq \frac{1}{1-\tau G(0)}\int_{\mathbb{R}^{d}}\int_{\mathbb{R}^{d}}\abs{u^n_1((n-1)\tau,x)-u^n_2((n-1)\tau,y)}q^{\ve}(x-y)dydx
\]
\[
+ \frac{1}{(1-\tau G(0))^{2}}
\frac{2 d \tau}{\ve} \sup_{s\in I(u_{1}^{n}((n-1)\tau))}\left[\sqrt{\vf'_{1}(s)} - \sqrt{\vf'_{2}(s)}\right]^2\norm{u^{n}_1((n-1)\tau)}_{TV(\mathbb{R}^{d})}
\]
\eqq{
+
 \frac{\tau\norm{u^{n}_1((n-1)\tau)}_{L_{1}(\mathbb{R}^{d})}}{(1-\tau G(0))^{2}} \left(\norm{G'_2}_{L_{\infty}(I_{p_1,p_2}(u_1^n((n-1)\tau)))}\sup_{s\in I(u_{1}^{n}((n-1)\tau))}\abs{p_{2}(s)-p_{1}(s)}+\norm{G_2-G_1}_{L_{\infty}(I_{p_1}(u_1^n((n-1)\tau)))}\right).
}{ba}
Proposition~\ref{Atheo} implies
\[
\norm{u^{n}_1((n-1)\tau)}_{L_{1}(\mathbb{R}^{d})} \leq (1-\tau G(0))^{-(n-1)}\norm{u^{0}_1}_{L_{1}(\mathbb{R}^{d})}, \hd \norm{(u^{n}_1((n-1)\tau))_+}_{L_{\infty}(\mathbb{R}^{d})} 
\]
\[
\leq (1-\tau G(0))^{-(n-1)}\norm{(u^{0}_1)_+}_{L_{\infty}(\mathbb{R}^{d})},
\]
as well as
\[
\norm{u^{n}_1((n-1)\tau)}_{TV(\mathbb{R}^{d})} \leq (1-\tau G(0))^{-(n-1)}\norm{u^{0}_1}_{TV(\mathbb{R}^{d})}.\]
Using these bounds in (\ref{ba}), iterating the resulting inequality and denoting
 \[
    I^n(u^0_1) := \left[-\norm{(u^0_1)_{-}}_{L_{\infty}(\mathbb{R}^{d})},\frac{1}{(1-\tau G(0))^{n}}\norm{(u^0_1)_+}_{L_{\infty}(\mathbb{R}^{d})}\right],
\]
\[
 I^n_{p_1,p_2}(u^0_1) := \left[\min_{i\in \{1,2\}} p_i\left(-\norm{(u^0_1)_{-}}_{L_{\infty}(\mathbb{R}^{d})}
\right),\max_{i\in \{1,2\}}p_i\left(\frac{\norm{(u^0_1)_+}_{L_{\infty}(\mathbb{R}^d)}}{(1-\tau G(0))^{n}}\right) \right],
\]
    \[
    I^n_{p_i}(u^0_1) := \left[  p_i\left(-\norm{(u^0_1)_{-}}_{L_{\infty}(\mathbb{R}^{d})}
\right),  p_i\left(\frac{\norm{(u^0_1)_+}_{L_{\infty}(\mathbb{R}^d)}}{(1-\tau G(0))^{n}}\right)\right], \hd i = 1,2,
    \]
leads to
\[
\int_{\mathbb{R}^{d}}\int_{\mathbb{R}^{d}}\abs{u^n_1(n\tau,x)-u^n_2(n\tau,y)}q^{\ve}(x-y)dydx
\]
\[
 \leq (1-\tau G(0))^{-n}\int_{\mathbb{R}^{d}}\int_{\mathbb{R}^{d}}\abs{u^0_1(x)-u^0_2(y)}q^{\ve}(x-y)dydx
\]
\[
+ (1-\tau G(0))^{-(n+1)}
\frac{2 d n \tau}{\ve} \sup_{s\in I^n(u^0_1)}\left[\sqrt{\vf'_{1}(s)} - \sqrt{\vf'_{2}(s)}\right]^2\norm{u^{0}_1}_{TV(\mathbb{R}^{d})}
\]
\[
+
 n\tau{(1-\tau G(0))^{-(n+1)}} \left(\norm{G'_2}_{L_{\infty}(I^{n}_{p_1,p_2}(u^0_n))}\sup_{s\in I^n(u^0_1)}\abs{p_{2}(s)-p_{1}(s)} + \norm{G_2-G_1}_{L_{\infty}(I^{n}_{p_1}(u^0_1))}\right)\norm{u^{0}_1}_{L_{1}(\mathbb{R}^{d})}.
 \]
Applying the estimate
\[
\abs{\int_{\mathbb{R}^{d}}\int_{\mathbb{R}^{d}}\abs{f(x)-g(y)}q^{\ve}(x-y)dydx - \norm{f-g}_{L_{1}(\mathbb{R}^d)} } \leq \ve \norm{f}_{TV(\mathbb{R}^d)},
\]
we get
\[
\norm{u^n_1(n\tau)-u^n_2(n\tau)}_{L_{1}(\mathbb{R}^{d})}
 \leq 2\ve (1-\tau G(0))^{-n}\norm{u_1^0}_{TV(\mathbb{R}^d)} + (1-\tau G(0))^{-n}\norm{u^0_1-u^0_2}_{L_{1}(\mathbb{R}^{d})}
\]
\[
+ (1-\tau G(0))^{-(n+1)}
\frac{2 d n \tau}{\ve} \norm{u^{0}_1}_{TV(\mathbb{R}^{d})}\sup_{s\in I^{n}(u^{0}_1)}\left[\sqrt{\vf'_{1}(s)} - \sqrt{\vf'_{2}(s)}\right]^2
\]
\[
+
n \tau{(1-\tau G(0))^{-(n+1)}} \left(\norm{G'_2}_{L_{\infty}(I^{n}_{p_1,p_2}(u^0_n))}\sup_{s\in I^n(u^0_1)}\abs{p_{2}(s)-p_{1}(s)} + \norm{G_2-G_1}_{L_{\infty}(I^{n}_{p_1}(u^0_1))}\right)\norm{u^{0}_1}_{L_{1}(\mathbb{R}^{d})}.
\]
Choosing $\ve =  \sup_{s\in I^{n}(u^0_1)}\left|\sqrt{\vf'_{1}(s)} - \sqrt{\vf'_{2}(s)}\right| \sqrt{dn\tau}$ and recalling $t=n\tau$ we obtain
\[
\norm{u^n_1(t)-u^n_2(t)}_{L_{1}(\mathbb{R}^{d})}
 \leq  (1- \frac{t}{n}G(0))^{-n}\norm{u^0_1-u^0_2}_{L_{1}(\mathbb{R}^{d})}
\]
\[
+4 \sqrt{d t } (1-\frac{t}{n} G(0))^{-(n+1)}
\norm{u^{0}_1}_{TV(\mathbb{R}^{d})}\sup_{s\in I^{n}(u^0_1)}\left|\sqrt{\vf'_{1}(s)} - \sqrt{\vf'_{2}(s)}\right|
\]
\eqq{
+
 t{(1-\frac{t}{n} G(0))^{-(n+1)}} \left(\norm{G'_2}_{L_{\infty}(I^{n}_{p_1,p_2}(u^0_n))}\sup_{s\in I^n(u^0_1)}\abs{p_{2}(s)-p_{1}(s)} + \norm{G_2-G_1}_{L_{\infty}(I^{n}_{p_1}(u^0_1))}\right)\norm{u^{0}_1}_{L_{1}(\mathbb{R}^{d})}.
}{discrstab1}
Passing with $n$ to infinity and recalling that, due to Theorem \ref{CL}, $u_1^n, u_2^n$ converge respectively to $u_1, u_2$ the mild solutions of (\ref{problems}) uniformly on compact subsets of $\mathbb{R}_{+}$, we arrive at
\[
\norm{u_1(t)-u_2(t)}_{L_{1}(\mathbb{R}^{d})}
 \leq  e^{tG(0)}\norm{u^0_1-u^0_2}_{L_{1}(\mathbb{R}^{d})}
+4 \sqrt{d t } e^{tG(0)} \norm{u^{0}_1}_{TV(\mathbb{R}^{d})}
\sup_{s\in I^{t}(u^0_1)}\left|\sqrt{\vf'_{1}(s)} - \sqrt{\vf'_{2}(s)}\right|
\]
\[
+
 te^{tG(0)} \norm{G'_2}_{L_{\infty}(I_{p_1,p_2}^t(u^0_1))}
 \sup_{s\in I^{t}(u^0_1) }\abs{p_{2}(s)-p_{1}(s)} \norm{u^{0}_1}_{L_{1}(\mathbb{R}^{d})}
 +  te^{tG(0)}\norm{G_2-G_1}_{L_{\infty}(I_{p_1}^t(u^0_1))}\norm{u^{0}_1}_{L_{1}(\mathbb{R}^{d})}.
\]
The estimate (\ref{discrstab}) follows from the fact that the right-hand side of (\ref{discrstab1}) converges increasingly.
\end{proof}

Finally, from Theorem \ref{thm:Main1} we deduce the following corollary, which is the main result of this paper (cf. Theorem~\ref{thm:MainIntro}).

\begin{coro} \label{thm:Main2}
Let $u_1,u_2$ be two mild solutions to (\ref{r1}), with $G_i$ satisfying (\ref{A2})  and $G_i \in W^{1,\infty}(\mathbb{R})$, $u^0_i \geq 0$, $u^0_i \in L_{1}(\mathbb{R}^{d})$, i =1,2. Assume additionally that $u^0_1 \in L_{\infty}(\mathbb{R}^d) \cap BV(\mathbb{R}^{d})$,  and
\[
\vf_{i}(t) = sign(t)|t|^{\gamma_i}, \hd p_{i}(t) = \frac{\gamma_i}{\gamma_i - 1}|t|^{\gamma_i - 1}, \hd \m{ for } i=1,2, \hd \gamma_2 > \gamma_1 >1.
\]
Then
\[
\norm{u_1(t)-u_2(t)}_{L_{1}(\mathbb{R}^{d})}
 \leq  e^{tG(0)}\norm{u^0_1-u^0_2}_{L_{1}(\mathbb{R}^{d})}  +  te^{tG(0)} \norm{G_2-G_1}_{L_{\infty}(\mathbb{R})} \norm{u^{0}_1}_{L_{1}(\mathbb{R}^{d})}
\]
\[
+4 \sqrt{d t } e^{tG(0)} \norm{u^{0}_1}_{TV(\mathbb{R}^{d})}
|\gamma_2-\gamma_1|\left(\frac{\sqrt{\gamma_1}}{\gamma_2-1}+\frac{M^{\frac{\gamma_1-1}{2}}}{\sqrt{\gamma_2}+\sqrt{\gamma_1}} +\frac{\sqrt{\gamma_2}M^{\frac{\gamma_2-1}{2}}|\ln M|}{2}\right)
\]
\[
+
 te^{tG(0)} \norm{G'_2}_{L_{\infty}(\mathbb{R})} |\gamma_2-\gamma_1|\left
(\frac{\gamma_1}{(\gamma_2-1)(\gamma_1-1)} +\frac{M^{\gamma_1-1}}{(\gamma_2-1)(\gamma_1-1)} + \frac{\gamma_2}{\gamma_2-1} M^{\gamma_2-1}|\ln M| \right)\norm{u^{0}_1}_{L_{1}(\mathbb{R}^{d})},
 \]
where $M = e^{tG(0)}\norm{u^{0}_1}_{L_{\infty}(\mathbb{R}^d)}$.
\end{coro}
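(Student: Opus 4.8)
The plan is to derive Corollary~\ref{thm:Main2} as a direct specialisation of Theorem~\ref{thm:Main1}: substitute $\vf_i(t)=\sign(t)|t|^{\gamma_i}$ and $p_i(t)=\tfrac{\gamma_i}{\gamma_i-1}|t|^{\gamma_i-1}$ into the abstract estimate, and then reduce each $\gamma$-dependent supremum appearing there to an explicit multiple of $|\gamma_2-\gamma_1|$. First I would simplify the intervals. Since $u^0_i\ge 0$ we have $(u^0_1)_-=0$, so $I^{t}(u^0_1)=[0,M]$ with $M=e^{tG(0)}\norm{u^0_1}_{L_\infty(\R^d)}$, and the pressure intervals $I^t_{p_1}, I^t_{p_1,p_2}$ reduce to images $p_i([0,M])$. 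Because $G_i\in W^{1,\infty}(\R)$, the factors $\norm{G_2-G_1}_{L_\infty(I^t_{p_1})}$ and $\norm{G'_2}_{L_\infty(I^t_{p_1,p_2})}$ may simply be enlarged to $\norm{G_2-G_1}_{L_\infty(\R)}$ and $\norm{G'_2}_{L_\infty(\R)}$; this already produces the second summand and the $G$-prefactor of the last summand. It then remains to estimate the two remaining suprema over $s\in[0,M]$, namely the diffusion quantity $\sup_s\abs{\sqrt{\vf'_1(s)}-\sqrt{\vf'_2(s)}}$ and the pressure quantity $\sup_s\abs{p_2(s)-p_1(s)}$.

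For the diffusion term, note $\sqrt{\vf'_i(s)}=\sqrt{\gamma_i}\,s^{a_i}$ with $a_i:=\tfrac{\gamma_i-1}{2}$ for $s\ge 0$. The idea is to split a difference of products into a difference of coefficients times a monomial, plus a coefficient times a difference of monomials:
\[
\sqrt{\gamma_1}\,s^{a_1}-\sqrt{\gamma_2}\,s^{a_2}=\bigl(\sqrt{\gamma_1}-\sqrt{\gamma_2}\bigr)s^{a_1}+\sqrt{\gamma_2}\,\bigl(s^{a_1}-s^{a_2}\bigr).
\]
The coefficient difference contributes $\tfrac{|\gamma_2-\gamma_1|}{\sqrt{\gamma_1}+\sqrt{\gamma_2}}\sup_{[0,M]}s^{a_1}=\tfrac{|\gamma_2-\gamma_1|}{\sqrt{\gamma_1}+\sqrt{\gamma_2}}M^{(\gamma_1-1)/2}$, which is precisely the middle bracketed term. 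For the monomial difference I would split the range of $s$ at $1$: on $[1,M]$, where $\ln s\ge 0$, I write $s^{a_2}-s^{a_1}=\int_{a_1}^{a_2}s^{b}\ln s\,db$ and bound the monotone integrand by its value at $s=M,\ b=a_2$, extracting the $M^{(\gamma_2-1)/2}\abs{\ln M}$ contribution; on $(0,1]$, where $\ln s\le 0$, I instead maximise $s^{a_1}-s^{a_2}$ directly (critical point $s_*=(a_1/a_2)^{1/(a_2-a_1)}$, value at most $\tfrac{a_2-a_1}{a_2}=\tfrac{\gamma_2-\gamma_1}{\gamma_2-1}$), which yields the purely $\gamma$-dependent term carrying the denominator $\gamma_2-1$.

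The pressure term is handled in the same spirit. With $p_i(s)=\tfrac{\gamma_i}{\gamma_i-1}s^{\gamma_i-1}$ I would use the identity $\tfrac{\gamma_2}{\gamma_2-1}-\tfrac{\gamma_1}{\gamma_1-1}=\tfrac{\gamma_1-\gamma_2}{(\gamma_2-1)(\gamma_1-1)}$ and write
\[
p_2(s)-p_1(s)=\tfrac{\gamma_2}{\gamma_2-1}\bigl(s^{\gamma_2-1}-s^{\gamma_1-1}\bigr)+\Bigl(\tfrac{\gamma_2}{\gamma_2-1}-\tfrac{\gamma_1}{\gamma_1-1}\Bigr)s^{\gamma_1-1}.
\]
The second summand gives at once the term with the $(\gamma_2-1)(\gamma_1-1)$ denominator and the factor $M^{\gamma_1-1}$. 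The first summand is estimated exactly as before, splitting $s$ at $1$: the $[1,M]$ part produces the $M^{\gamma_2-1}\abs{\ln M}$ term, and the $(0,1]$ part, again via direct maximisation of $s^{\gamma_1-1}-s^{\gamma_2-1}$, produces the $M$-free $\gamma$-dependent term (after the harmless monotonicity bound $\tfrac{\gamma_2}{\gamma_2-1}\le\tfrac{\gamma_1}{\gamma_1-1}$ to match the stated form). Collecting all contributions and feeding them back into the estimate of Theorem~\ref{thm:Main1} yields the claimed inequality.

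The reduction to Theorem~\ref{thm:Main1} and the interval bookkeeping are routine; the genuine work lies entirely in these one-variable estimates. I expect the main obstacle to be obtaining clean, $\gamma$-uniform, explicit constants for quantities of the form $\sup_{s\in[0,M]}s^{b}\abs{\ln s}$ and $\sup_{s\in[0,M]}(s^{b_1}-s^{b_2})$. The delicate points are that the logarithmic blow-up of $\ln s$ as $s\to 0^+$ must be absorbed by the vanishing power $s^{b}$ (so the near-origin supremum is finite and scales like $1/(\gamma-1)$), and that one must treat both $M<1$ and $M\ge 1$ — equivalently both signs of $\ln s$ — in order to package everything into a single bound exhibiting the $M^{(\gamma_i-1)/2}$, $M^{\gamma_i-1}$ and $\abs{\ln M}$ dependence recorded in the statement.
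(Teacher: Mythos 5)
Your overall route is the same as the paper's: specialise Theorem~\ref{thm:Main1} to the power laws, note that $u^0_i\ge 0$ reduces all intervals to $[0,M]$ and that $G_i\in W^{1,\infty}$ lets you enlarge the $G$-norms to all of $\R$, and then reduce the two suprema over $[0,M]$ to explicit multiples of $|\gamma_2-\gamma_1|$ by one-variable calculus. The boundary contribution $\abs{g(M)}$ is handled in the paper by exactly your coefficient-difference/monomial-difference splitting, and your treatment of the pressure function $h=p_1-p_2$ matches the paper's as well (there your final monotonicity step $\tfrac{\gamma_2}{\gamma_2-1}\le\tfrac{\gamma_1}{\gamma_1-1}$ does go the right way and recovers the stated $\tfrac{\gamma_1}{(\gamma_2-1)(\gamma_1-1)}$).

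The one place where your argument does not deliver the stated constant is the $M$-free term in the diffusion bracket. You maximise only the monomial difference $s^{a_1}-s^{a_2}$ (with $a_i=\tfrac{\gamma_i-1}{2}$) over $(0,1]$, getting at most $\tfrac{a_2-a_1}{a_2}=\tfrac{\gamma_2-\gamma_1}{\gamma_2-1}$, and then multiply by the prefactor $\sqrt{\gamma_2}$; this yields $\tfrac{\sqrt{\gamma_2}}{\gamma_2-1}\abs{\gamma_2-\gamma_1}$, whereas the corollary claims $\tfrac{\sqrt{\gamma_1}}{\gamma_2-1}\abs{\gamma_2-\gamma_1}$. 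Since $\gamma_2>\gamma_1$ there is no monotonicity trick converting $\sqrt{\gamma_2}$ into $\sqrt{\gamma_1}$, and the discarded factor $s_*^{a_1}=(a_1/a_2)^{a_1/(a_2-a_1)}$ does not help: for $\gamma_1$ close to $1$ and $\gamma_2$ large it is close to $1$ while $\sqrt{\gamma_1/\gamma_2}$ is small, so your constant can exceed the stated one by an arbitrarily large factor. The paper avoids this by locating the interior critical point of the \emph{full} function $g(s)=\sqrt{\gamma_1}\,s^{a_1}-\sqrt{\gamma_2}\,s^{a_2}$ at $s_0=\Upsilon^{2/(\gamma_2-\gamma_1)}$ with $\Upsilon=\tfrac{\sqrt{\gamma_1}(\gamma_1-1)}{\sqrt{\gamma_2}(\gamma_2-1)}$, computing $g(s_0)=\Upsilon^{(\gamma_1-1)/(\gamma_2-\gamma_1)}\tfrac{\sqrt{\gamma_1}}{\gamma_2-1}(\gamma_2-\gamma_1)$ exactly, and only then bounding $\Upsilon^{(\gamma_1-1)/(\gamma_2-\gamma_1)}\le 1$ (immediate, since $\Upsilon<1$ and the exponent is positive). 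So your proof establishes the corollary with a strictly weaker constant in that single term; to obtain the inequality as stated you must optimise $g$ itself rather than its two pieces separately. Everything else in your proposal is sound, and your explicit split of the range at $s=1$ in fact handles the case $M<1$ more transparently than the paper's bound on $\abs{g(M)}$ does.
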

\begin{proof}[Proof of Corollary \ref{thm:Main2}]
Let us find the maximum on $[0,e^{tG(0)}||u^0_1||_{L_{\infty}(\mathbb{R}^{N})}]$ of the function $g$ defined below
\[
g(s) := \sqrt{\varphi'_1(s)} - \sqrt{\vf_2'(s)} = \sqrt{\gamma_1}s^{\frac{\gamma_1-1}{2}} - \sqrt{\gamma_2}s^{\frac{\gamma_2-1}{2}}.
\]
Clearly $g(0) = 0$ and for $M:=e^{tG(0)}||u^0_1||_{L_{\infty}(\mathbb{R}^{N})}$
\[
\abs{g(M)} \leq \abs{\sqrt{\gamma_1}-\sqrt{\gamma_2}}M^{\frac{\gamma_1-1}{2}} + \sqrt{\gamma_2}\abs{M^{\frac{\gamma_1-1}{2}} - M^{\frac{\gamma_2-1}{2}}}
\leq |\gamma_2-\gamma_1|\left(\frac{M^{\frac{\gamma_1-1}{2}}}{\sqrt{\gamma_2}+\sqrt{\gamma_1}} +\frac{\sqrt{\gamma_2}M^{\frac{\gamma_2-1}{2}}|\ln M|}{2}\right).
\]
Since
\[
g'(s) = \sqrt{\gamma_1}\frac{\gamma_1-1}{2}s^{\frac{\gamma_1-3}{2}} - \sqrt{\gamma_2}\frac{\gamma_2-1}{2}s^{\frac{\gamma_2-3}{2}} = \frac{1}{2}s^{\frac{\gamma_1-3}{2}}\left(\sqrt{\gamma_1}(\gamma_1-1) -\sqrt{\gamma_2}(\gamma_2-1)s^{\frac{\gamma_2-\gamma_1}{2}} \right),
\]
$g$ attains a local maximum in
\[
s_0 = \left(\Upsilon(\gamma_1,\gamma_2)\right)^{\frac{2}{\gamma_2-\gamma_1}}, \hd \m{ where } \hd \Upsilon(\gamma_1,\gamma_2) := \frac{\sqrt{\gamma_1}(\gamma_1-1)}{\sqrt{\gamma_2}(\gamma_2-1)},
\]
with value
\begin{align*}
    g(s_0) = \sqrt{\gamma_1}\Upsilon(\gamma_1,\gamma_2)^{\frac{\gamma_1-1}{\gamma_2 - \gamma_1}} - \sqrt{\gamma_2}\Upsilon(\gamma_1,\gamma_2)^{\frac{\gamma_2-1}{\gamma_2 - \gamma_1}}
    &= \Upsilon(\gamma_1,\gamma_2)^{\frac{\gamma_1-1}{\gamma_2 - \gamma_1}}\left(\sqrt{\gamma_1} - \sqrt{\gamma_2}\Upsilon(\gamma_1,\gamma_2)\right)\\
    &= \Upsilon(\gamma_1,\gamma_2)^{\frac{\gamma_1-1}{\gamma_2 - \gamma_1}} \frac{\sqrt{\gamma_1}}{\gamma_2-1}(\gamma_2-\gamma_1).
\end{align*}
We claim that for any $1 \leq \gamma_1 < \gamma_2$, there holds
\eqq{
\Upsilon(\gamma_1,\gamma_2)^{\frac{\gamma_1-1}{\gamma_2 - \gamma_1}} \leq 1.
}{Aest}
Indeed, we have
\[
\Upsilon(\gamma_1,\gamma_2)^{\frac{\gamma_1-1}{\gamma_2 - \gamma_1}} = \left(1+\frac{\sqrt{\gamma_1}(\gamma_1-1) - \sqrt{\gamma_2}(\gamma_2-1)}{\sqrt{\gamma_2}(\gamma_2-1)}\right)^{\frac{\sqrt{\gamma_2}(\gamma_2-1)}{{\sqrt{\gamma_1}(\gamma_1-1) - \sqrt{\gamma_2}(\gamma_2-1)}}\frac{{\sqrt{\gamma_1}(\gamma_1-1) - \sqrt{\gamma_2}(\gamma_2-1)}}{\sqrt{\gamma_2}(\gamma_2-1)}\frac{\gamma_1-1}{\gamma_2-\gamma_1}},
\]
and
\begin{align*}
  \frac{{\sqrt{\gamma_1}(\gamma_1-1) - \sqrt{\gamma_2}(\gamma_2-1)}}{\sqrt{\gamma_2}(\gamma_2-1)}\frac{\gamma_1-1}{\gamma_2-\gamma_1} &= \frac{(\gamma_1-1)(\sqrt{\gamma_1} - \sqrt{\gamma_2}) + \sqrt{\gamma_2}(\gamma_1-\gamma_2)}{\sqrt{\gamma_2}(\gamma_2-1)(\gamma_2-\gamma_1)}(\gamma_1-1)\\[1em]
  &=-\frac{(\gamma_1-1)^2}{\sqrt{\gamma_2}(\gamma_2-1)(\sqrt{\gamma_1}+\sqrt{\gamma_2})} - \frac{\gamma_1-1}{\gamma_2-1},
\end{align*}
which is negative.
Hence, since
\[
\left(1+\frac{\sqrt{\gamma_1}(\gamma_1-1) - \sqrt{\gamma_2}(\gamma_2-1)}{\sqrt{\gamma_2}(\gamma_2-1)}\right)^{\frac{\sqrt{\gamma_2}(\gamma_2-1)}{{\sqrt{\gamma_1}(\gamma_1-1) - \sqrt{\gamma_2}(\gamma_2-1)}}} \longrightarrow e \m{ as } \gamma_1\longrightarrow \gamma_2,
\]
we infer (\ref{Aest}).
Therefore, we have
\[
|g(s_0)| \leq |\gamma_2-\gamma_1|\frac{\sqrt{\gamma_1}}{\gamma_2-1},
\]
and consequently
\begin{equation}\label{eq:GammaEstimate1}
    \sup_{s\in [0,M]}|g(s)| \leq |\gamma_2-\gamma_1|\left(\frac{\sqrt{\gamma_1}}{\gamma_2-1}+\frac{M^{\frac{\gamma_1-1}{2}}}{\sqrt{\gamma_2}+\sqrt{\gamma_1}} +\frac{\sqrt{\gamma_2}M^{\frac{\gamma_2-1}{2}}|\ln M|}{2}\right).
\end{equation}

We deal similarly with the function
\[
h(s) := p_1(s) - p_2(s) = \frac{\gamma_1}{\gamma_1-1}s^{\gamma_1-1} - \frac{\gamma_2}{\gamma_2-1}s^{\gamma_2-1}.
\]
We have $h(0)=0$ and
\[
|h(M)| \leq |\gamma_2-\gamma_1|\frac{M^{\gamma_1-1}}{\gamma_2-1}\left(\frac{1}{\gamma_1-1} + \gamma_2 M^{\gamma_2-\gamma_1}|\ln M|\right).
\]
The critical point $s_0$ now equals
\[
s_0 = \left(\frac{\gamma_1}{\gamma_2}\right)^{\frac{1}{\gamma_2-\gamma_1}},
\]
for which
\[
\abs{h(s_0)} = \left(\frac{\gamma_1}{\gamma_2}\right)^{\frac{\gamma_1-1}{\gamma_2-\gamma_1}} \left(\frac{\gamma_1}{\gamma_1-1}-\frac{\gamma_1}{\gamma_2-1}\right) = |\gamma_2-\gamma_1|\frac{\gamma_1}{(\gamma_2-1)(\gamma_1-1)}\left(\frac{\gamma_1}{\gamma_2}\right)^{\frac{\gamma_1-1}{\gamma_2-\gamma_1}} 
\]
\[
\leq  |\gamma_2-\gamma_1|\frac{\gamma_1}{(\gamma_2-1)(\gamma_1-1)}.
\]
It follows that
\begin{equation}\label{eq:GammaEstimate2}
 \sup_{s\in [0,M]}\abs{p_{2}(s)-p_{1}(s)} \leq |\gamma_2-\gamma_1|\left
(\frac{\gamma_1}{(\gamma_2-1)(\gamma_1-1)} +\frac{M^{\gamma_1-1}}{(\gamma_2-1)(\gamma_1-1)} + \frac{\gamma_2}{\gamma_2-1} M^{\gamma_2-1}|\ln M| \right).
\end{equation}
The results follows by using~\eqref{eq:GammaEstimate1} and~\eqref{eq:GammaEstimate2} in Theorem~\ref{thm:Main1}.
\end{proof}

\vspace{0.5em}
\noindent\textbf{Acknowledgements.}
T.D. acknowledges the support of the National Science Centre, Poland, project no. 2023/51/D/ST1/02316. P.G. was supported by the National Science Centre Poland grant UMO-2024/54/A/ST1/00159.
B.M. and Z.S. acknowledge support from the Excellence Initiative - Research University programme (2020-2026) at the University of Warsaw.
K.R and A.W-K. were partly supported by the grant Sonata Bis UMO-2020/38/E/ST1/00469, National Science Centre, Poland.

\end{document}